\documentclass[twoside, 12pt]{article}
\usepackage{mathrsfs}
\usepackage{amssymb, amsmath, mathrsfs, amsthm}
\usepackage{graphicx}
\usepackage{color}
\usepackage{tikz}
\usepackage[top=2cm, bottom=2cm, left=2cm, right=2cm]{geometry}
\usepackage{float, caption, subcaption}
    \usepackage{diagbox}
\usepackage{algorithm}
\usepackage{algpseudocode}
\usepackage{amsmath}
\usepackage[colorlinks,
  linkcolor=blue,
 anchorcolor=blue,
 citecolor=blue]{hyperref} 
\input{epsf}

\newcommand{\bd}{\begin{description}}
\newcommand{\ed}{\end{description}}
\newcommand{\bi}{\begin{itemize}}
\newcommand{\ei}{\end{itemize}}
\newcommand{\be}{\begin{enumerate}}
\newcommand{\ee}{\end{enumerate}}
\newcommand{\beq}{\begin{equation}}
\newcommand{\eeq}{\end{equation}}
\newcommand{\beqs}{\begin{eqnarray*}}
\newcommand{\eeqs}{\end{eqnarray*}}

\definecolor{DarkGreen}{rgb}{0.2, 0.6, 0.3}

\usepackage{multicol}
\usepackage{amsthm}
\usepackage{multirow}
\usepackage{makecell}

\newtheorem{theorem}{Theorem}[section]

\newtheorem{assumption}{Assumption}[section]
\newtheorem{lemma}{Lemma}[section]
\newtheorem{definition}{Definition}[section]
\newtheorem{corollary}[theorem]{Corollary}
\newtheorem{case}{Case}

\newtheorem{remark}{Remark}[section]

\newtheorem{proposition}{Proposition}[section]

\begin{document}
\title{\textbf{Ramsey-Turán theory for partially-ordered sets
} \footnote{
The work was supported by the Hungarian National
Research, Development and Innovation Office
NKFIH (No.~SSN135643 and K132696); the National Science Foundation of China
(Nos. 12471329 and 12061059)}}

\author{Gyula O.H. Katona\thanks{HUN-REN Alfr\'ed R\'enyi Institute of Mathematics, Budapest Re\'altanoda utca 13-15, 1053, Hungary. {\tt
katona.gyula.oh@renyi.hu}}, \
\ Yaping Mao\footnote{Corresponding author: Academy of Plateau
Science and Sustainability, and School of Mathematics and Statistics, Qinghai Normal University, Xining, Qinghai 810008, China.
{\tt yapingmao@outlook.com; myp@qhnu.edu.cn}}}
\date{}
\maketitle

\begin{abstract}
We say that a poset $Q$
contains a copy (resp.~an induced copy) of a poset $P$
if there is an injection $f : P \to Q$
such that for any $x,y \in P$,
$f(x)\leq f(y)$ in $Q$ if (resp.~if and only if) $x\leq y$ in $P$.
Let $\mathcal{Q} = \{Q_n\}_{n\in\mathbb{N}}$ be a host poset family, $P$ a poset, and $n,l,t\in\mathbb{N}$. The \emph{weak poset Ramsey-Turán number for $t$-chains}, denoted $\operatorname{RT}(\mathcal{Q}; n, P, l, t)$, resp. the \emph{strong poset Ramsey-Turán number for $t$-chains}, denoted $\operatorname{RT}^{\sharp}(\mathcal{Q}; n, P, l, t)$, is the maximum value of $|\mathcal{C}_t(Q)|$ over all posets $Q$ satisfying the following three conditions: $Q \subseteq Q_n$;
$Q$ is weakly $P$-free resp. strongly $P$-free;
$w(Q) < l$, where $w(Q)$ is the cardinality of a largest antichain in $Q$.
In this paper, we study the upper and lower bounds of poset Ramsey-Turán numbers, introducing weak and strong Boolean Ramsey-Turán numbers for \(t\)-chains (\(\operatorname{RT}(\mathcal{B}; n, P, l, t)\) and \(\operatorname{RT}^\sharp(\mathcal{B}; n, P, l, t)\)), where $\mathcal{B}=\{B_{n} : n\geq 1\}$ is
the family of Boolean lattices. We first prove that \(\operatorname{RT}(\mathcal{B};n,P,l,t) \leq \operatorname{RT}^\sharp(\mathcal{B};n,P,l,t)\) for any poset \(P\), with equality for chain posets \(C_k\); for \(t=1\), we determine the exact value \(\operatorname{RT}(\mathcal{B}; n, C_k, l) = \operatorname{RT}^{\sharp}(\mathcal{B}; n, C_k, l) = (k-1)(l - 1)\). A universal upper bound for both weak and strong Boolean Ramsey-Turán numbers is given, and \(\operatorname{RT}^\sharp(\mathcal{B}; n, A_k, l, t) = \Theta(n^t)\) for fixed \(k,l,t\) and \(\min\{l-1,k-1\}\ge1\). We also show that for every non-chain poset \(P\), the strong number is \(\Theta(n^t)\) for fixed \(l,t\); moreover, if \(h(P)=r>t\) and \(l(n)=\lfloor M_n^\beta\rfloor\) with \(0<\beta\leq\alpha<1\), then both weak and strong versions satisfy \(\Omega(2^{\beta n}n^{-\beta/2})\) lower bounds. \\[2mm]
{\bf Keywords:} Ramsey theory; Poset; Poset Ramsey number; Poset Ramsey-Turán number; Boolean lattice; Chain decomposition\\[2mm]
{\bf AMS subject classification 2010:} 05D05; 05D10; 05D40; 06A07.
\end{abstract}

\section{Introduction}

Turán's theorem \cite{Tu41} states that the balanced complete $s$-partite Turán graph $T_s(n)$ has the maximum edge number among all $K_{s+1}$-free $n$-vertex graphs. These Turán graphs have rigid structures, particularly independent sets of linear size in $n$. A natural question is the maximum edge number of $K_{s+1}$-free $n$-vertex graphs without such rigid structures. First introduced by Sós \cite{ES70} in 1969, such problems form the core of Ramsey-Turán theory.

Let $\alpha(G)$ denote the size of the largest independent set in a graph $G$, and set $c(G)=\alpha(\overline{G})$ (where $\overline{G}$ is the complement of $G$); thus $c(G)$ is the largest $p$ such that $K_p \subseteq G$.

\begin{definition}
The \emph{Ramsey-Turán number} $\operatorname{RT}(n, H, l)$ for a graph $H$ and $n,l\in\mathbb{N}$ is the maximum number of edges in an $n$-vertex $H$-free graph $G$ with $\alpha(G)<l$.
\end{definition}
For $H=K_k$, we abbreviate this to $\operatorname{RT}(n, k, l)$.

\begin{definition}
For an $r$-uniform hypergraph $\mathcal{H}$ and function $f(n)$, the \emph{Ramsey-Turán number} $\operatorname{RT}(n, \mathcal{H}, f(n))$ is the maximum number of edges in an $n$-vertex $r$-uniform $\mathcal{H}$-free hypergraph with independence number at most $f(n)$.
\end{definition}

It was motivated by the classical theorems of Ramsey and Turán and their connections to geometry, analysis, and number theory. Ramsey-Turán theory has attracted a great deal of attention over 60 years; see the nice survey by Simonovits and Sós \cite{SS01} and some papers \cite{LR19,BE76,BLS17,BMMM18,LPR22,BC25}.

Recall that the $n$-dimensional Boolean lattice $B_n$ is the power set of $[n] = \{1,2,\dots,n\}$ equipped with the inclusion relation $\subseteq$, and the \textit{Boolean host poset family} is $\mathcal{B} = \{B_n\}_{n\in\mathbb{N}}$, which satisfies $B_n \subseteq B_{n+1}$ and $|B_n|=2^n < 2^{n+1}=|B_{n+1}|$.

\begin{definition}\label{defi-BRTn_combined}
Let $P$ be a poset, and $n,l,t\in\mathbb{N}$. The \emph{weak (resp. strong) Boolean Ramsey-Turán number for $t$-chains}, denoted $\operatorname{RT}(\mathcal{B}; n, P, l, t)$ (resp. $\operatorname{RT}^{\sharp}(\mathcal{B}; n, P, l, t)$), is the maximum value of $|\mathcal{C}_t(Q)|$ over all posets $Q$ satisfying:
\begin{itemize}
    \item[(1)] $Q \subseteq B_n$;
    \item[(2)] $Q$ is weakly (resp. strongly) $P$-free;
    \item[(3)] $w(Q) < l$,
\end{itemize}
where $\mathcal{C}_t(Q)$ denotes the set of all $t$-chains in $Q$, and $w(Q)$ is the width of the poset $Q$.
\end{definition}

A $t$-chain of a poset is a totally ordered subset of size exactly $t$. In the case $t=1$, 1-chains are singletons and the Boolean Ramsey-Turán number $\operatorname{RT}^{\sharp}(\mathcal{B}; n, P, l)$ abbreviates $\operatorname{RT}^{\sharp}(\mathcal{B}; n, P, l, 1)$.

For nonnegative sequences \(a_n,b_n\), we write \(a_n=O(b_n)\) if
\(a_n\le Cb_n\) for all sufficiently large \(n\) and some constant \(C>0\).
We write \(a_n=\Omega(b_n)\) if \(a_n\ge cb_n\) for all sufficiently large
\(n\) and some constant \(c>0\). We write \(a_n=\Theta(b_n)\) if both
\(a_n=O(b_n)\) and \(a_n=\Omega(b_n)\).

\subsection{Our results}

In this paper we study Ramsey-Tur\'{a}n type extremal functions for posets.

\medskip\noindent\textbf{(1) Weak and strong Boolean Ramsey-Turán numbers for $t$-chains.}
In Section~\ref{sec_2}, we study the weak and strong Boolean Ramsey-Turán numbers for $t$-chains. Theorem \ref{th-relation} establishes the universal inequality $\operatorname{RT}(\mathcal{B};n,P,l,t) \leq \operatorname{RT}^\sharp(\mathcal{B};n,P,l,t)$ for any poset $P$. For the $k$-element chain $C_k$, Theorem \ref{th-chain-equality} shows that weak and strong embeddings are equivalent, hence $\operatorname{RT}(\mathcal{B};n,C_k,l,t) = \operatorname{RT}^\sharp(\mathcal{B};n,C_k,l,t)$. For the core case $t=1$, Theorem~\ref{th-chain-exact-value} gives the exact value
$\operatorname{RT}(\mathcal{B}; n, C_k, l) =\operatorname{RT}^{\sharp}(\mathcal{B}; n, C_k, l) = (k-1)(l - 1)$
whenever $l-1\leq \binom{n-k+2}{\lfloor(n-k+2)/2\rfloor}$; in particular, this holds whenever $n\ge k+l-3$.

\medskip\noindent\textbf{(2) An upper bound for poset Ramsey-Turán numbers.}
In Section~\ref{sec3}, we use Dilworth's theorem to derive a universal upper bound for Boolean Ramsey-Turán numbers. This purely combinatorial bound is valid for all finite posets $P$ and does not use any special forbidden-poset structure.
\begin{theorem}\label{thm:main-universal}
Let $P$ be a finite poset and $l$ a positive integer. For any fixed positive integer $t$, we have 
\[
\operatorname{RT}(\mathcal{B}; n, P, l, t)\le
\operatorname{RT}^\sharp(\mathcal{B}; n, P, l, t) \leq
\begin{cases}
0 & \text{if } l = 1, \\
\displaystyle\sum_{i=1}^{\min(t,l-1)} \binom{l-1}{i} \binom{i(n+1)}{t} & \text{if } l \geq 2.
\end{cases}
\]

As $n \to \infty$ (with fixed $t, l$), the asymptotic upper bound holds:
\[
\operatorname{RT}^\sharp(\mathcal{B}; n, P, l, t) \leq \left( \frac{1}{t!} \sum_{i=1}^{\min(t,l-1)} \binom{l-1}{i} i^t \right) n^t + O(n^{t-1}).
\]
For the special case $t=1$, this reduces to the universal asymptotic bound:
$\operatorname{RT}^\sharp(\mathcal{B}; n, P, l) \leq (l-1) \cdot n + O(1)$.
\end{theorem}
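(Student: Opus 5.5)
The first inequality is exactly Theorem~\ref{th-relation}, so I will cite it and concentrate on bounding $\operatorname{RT}^\sharp$. I will in fact bound $|\mathcal{C}_t(Q)|$ for every $Q\subseteq B_n$ with $w(Q)<l$, without using $P$-freeness. If $l=1$, then $w(Q)=0$, so $Q=\emptyset$ and there are no $t$-chains; this gives the value $0$.

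For $l\ge 2$, Dilworth's theorem partitions $Q$ into $m\le w(Q)\le l-1$ chains $Q_1,\dots,Q_m$. A chain in $B_n$ has at most one set of each size, so $|Q_j|\le n+1$. Take any $t$-chain $T\subseteq Q$ and let $S(T)=\{j: T\cap Q_j\neq\emptyset\}$. Then $1\le |S(T)|\le \min(t,m)\le\min(t,l-1)$, and $T$ is a $t$-subset of $\bigcup_{j\in S(T)}Q_j$, which has at most $|S|(n+1)$ elements. Grouping chains according to $S=S(T)$ (or simply by a set of size $i$ containing the support), the number of $t$-chains is at most
\[
\sum_{i=1}^{\min(t,l-1)}\binom{m}{i}\binom{i(n+1)}{t}\le \sum_{i=1}^{\min(t,l-1)}\binom{l-1}{i}\binom{i(n+1)}{t}.
\]
Here I use that $\binom{m}{i}\le\binom{l-1}{i}$ because $m\le l-1$. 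Each $T$ is counted at least once, namely for $S=S(T)$ with $i=|S(T)|$, so overcounting is harmless. The only point needing care is that chains $Q_j$ may be empty or $m$ may be less than $l-1$, and monotonicity handles both.

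For the asymptotics, I expand $\binom{i(n+1)}{t}=\frac{(i(n+1))^t}{t!}+O(n^{t-1})=\frac{i^t n^t}{t!}+O(n^{t-1})$ for fixed $i,t$. Summing finitely many terms gives the stated leading coefficient. For $t=1$ the sum has the single term $i=1$, giving $(l-1)(n+1)=(l-1)n+O(1)$.

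I do not expect a real obstacle. The only subtle step is making sure the support-counting bound, with $i$ ranging only up to $\min(t,l-1)$, covers every chain; this holds because a $t$-chain meets at most $t$ of the Dilworth chains and at most $m\le l-1$ of them exist.
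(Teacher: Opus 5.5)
Your proposal is correct and follows essentially the same route as the paper: a Dilworth decomposition into at most $l-1$ chains, each of size at most $n+1$, then grouping the $t$-chains by the number $i$ of decomposition chains they meet and bounding each class by $\binom{l-1}{i}\binom{i(n+1)}{t}$, followed by the same asymptotic expansion. The only cosmetic difference is that the paper pads with empty chains to get exactly $l-1$ chains (Lemma~\ref{lem:universal-decomposition}), whereas you keep $m=w(Q)$ chains and use $\binom{m}{i}\le\binom{l-1}{i}$.
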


Lemma \ref{lem:antichain_embedding} characterizes strong $A_k$-freeness via width ($w(Q)\leq k-1$). Combining Theorem \ref{thm:main-universal} with a disjoint chain construction, Corollary \ref{cor:antichain-complete} gives matching-order two-sided bounds for $\operatorname{RT}^\sharp(\mathcal{B}; n, A_k, l, t)$, with $\operatorname{RT}^\sharp(\mathcal{B}; n, A_k, l, t)=\Theta(n^t)$ for fixed $k,l,t$ and $L=\min\{l-1,k-1\}\geq1$. We also point out that the weak antichain case is different: weak $A_k$-freeness is equivalent to having fewer than $k$ elements.

\medskip\noindent\textbf{(3) Lower bounds for poset Ramsey-Turán numbers.}
In Section~\ref{sec_4} we prove a general lower bound for non-chain posets:
\begin{theorem}\label{thm:main-lower}
Let $P$ be a finite poset with $w(P)=s\ge 2$. Let $l\ge2$ and put
\[
L=\min\{l-1,s-1\}.
\]
If $n\ge L$ and $1\le t\le n-L+1$, then
\[
\operatorname{RT}^{\sharp}(\mathcal{B};n,P,l,t)\ge
L\binom{n-L+1}{t}.
\]
Consequently, for every fixed non-chain poset $P$ and fixed integers $l,t$ with $l\ge2$, we have
\[
\operatorname{RT}^{\sharp}(\mathcal{B};n,P,l,t)=\Theta(n^t)
\]
whenever $\min\{l-1,w(P)-1\}\ge1$.
\end{theorem}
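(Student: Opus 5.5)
We prove the bound by exhibiting $L$ pairwise incomparable chains in $B_n$, each of length $n-L+2$. Their union $Q$ has width exactly $L$, so $Q$ contains no antichain of size $s$, and each chain contributes $\binom{n-L+2}{t}$ many $t$-chains. To land exactly on the bound $L\binom{n-L+1}{t}$ it is enough to use chains of length $n-L+1$; we may drop one element per chain if needed.

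The construction is as follows. Write $m=n-L+1\ge 1$ and split $[n]$ into $\{1,\dots,L-1\}$ and $Y=\{L,\dots,n\}$, where $|Y|=m$. Enumerate $Y=\{y_1,\dots,y_m\}$ and let $Y_j=\{y_1,\dots,y_j\}$ for $0\le j\le m$. For $i=1,\dots,L$, set
\[
D_i=\bigl\{\,([L-1]\setminus\{i\})\cup Y_j : 1\le j\le m\,\bigr\}\quad (i\le L-1),
\]
and for $i=L$ take $D_L=\{[L-1]\cup Y_j: 0\le j\le m-1\}$. I expect cross-incomparability to be the main point to check. For $i\neq i'\le L-1$, each set of $D_i$ misses $i$ and contains $i'$, while each set of $D_{i'}$ does the reverse, so no set of one is contained in a set of the other. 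Comparing $D_i$ with $D_L$ is the problematic case: $[L-1]\cup Y_j$ contains $([L-1]\setminus\{i\})\cup Y_{j'}$ whenever $j\ge j'$, so these chains are not incomparable.

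We therefore use a cleaner construction: $L$ disjoint "parallel" chains built on distinct singleton markers. This requires $L$ marker elements and leaves a ground set of $n-L$, giving chains of length $n-L+1$ (which includes the empty extension). Let $X=\{1,\dots,L\}$, $Y=[n]\setminus X$ with $|Y|=n-L$, and $Y_0\subset\cdots\subset Y_{n-L}$ a maximal chain in $2^Y$. Set
\[
D_i=\{(X\setminus\{i\})\cup Y_j: 0\le j\le n-L\}.
\]
Each $D_i$ is a chain of size $n-L+1$. For $i\ne i'$, sets in $D_i$ contain $i'$ but not $i$, and sets in $D_{i'}$ do the reverse, so every pair across different chains is incomparable. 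For $L=1$ this is just one chain, which is fine.

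Let $Q=\bigcup_i D_i$. Every chain of $Q$ lies inside a single $D_i$, so $Q$ is a disjoint union of $L$ chains; hence $|\mathcal C_t(Q)|=L\binom{n-L+1}{t}$ and $w(Q)=L$. Since $L\le l-1$, we get $w(Q)<l$. For strong $P$-freeness, an induced copy of $P$ in $Q$ would carry an antichain of size $s=w(P)$ to an antichain of $Q$ (the embedding preserves incomparability), which is impossible because $w(Q)=L\le s-1$. Even a weak copy needs only injectivity plus comparabilities, but the strong notion is what we need here. This proves the inequality.

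For the $\Theta(n^t)$ statement, fix $P$, $l$, $t$ with $L\ge1$. The lower bound $L\binom{n-L+1}{t}=\Omega(n^t)$ follows from the construction above. The upper bound $O(n^t)$ is Theorem~\ref{thm:main-universal}, whose bound is a finite sum of terms $\binom{i(n+1)}{t}=O(n^t)$.
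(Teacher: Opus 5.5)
Your final construction is correct and is essentially the paper's proof. The paper uses the singleton markers $\{i\}\cup\{L+1,\dots,L+j\}$, whereas you use the complementary markers $([L]\setminus\{i\})\cup Y_j$; otherwise both arguments get cross-chain incomparability from the same two-element witness, and then derive $w(Q)=L<l$, strong $P$-freeness from the fact that strong embeddings preserve antichains, the count $L\binom{n-L+1}{t}$, and the $O(n^t)$ upper bound from Theorem~\ref{thm:main-universal}. You should delete the opening claim about chains of length $n-L+2$, which is false already for $L=2$ because neither $\emptyset$ nor $[n]$ can appear, and the abandoned first construction, since neither is used.
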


With the above structural tools and the Chernoff bound for binomial variables, we now restate our main result on the asymptotic lower bound for the Boolean Ramsey-Turán number $\mathrm{RT}(\mathcal{B} ; n, P, l, t)$ for general $t$, which leverages the structural properties from Assumption \ref{ass:boolean}:
\begin{assumption}\label{ass:boolean}
Let $\mathcal{B} = \{B_n\}_{n\in\mathbb{N}}$ denote the sequence of $n$-dimensional Boolean lattices, where $B_n = 2^{[n]}$ is equipped with the inclusion partial order. Let $P$ be a finite poset with height $h(P) = r \geq 2$ (by definition, $P$ contains an $r$-element chain and no longer chains), and let $t$ be an integer with $1 \leq t < r$. Let $M_n = \binom{n}{\lfloor n/2 \rfloor}$ (Sperner number), which satisfies $M_n = \Theta(2^n / \sqrt{n})$ by Stirling's approximation. For fixed $0<\alpha<1$ and all sufficiently large $n$, the following hold:
\begin{itemize}
    \item[(A1)] For any fixed $\gamma > 0$, $M_n^\gamma = \omega(n^c)$ for all fixed $c > 0$.
    \item[(A2)] There exists $\mathcal{L} = \bigcup_{i=1}^{\lfloor M_n^\alpha \rfloor} C_i \subseteq B_n$ where each $C_i$ is an $(r-1)$-element chain, all elements of $C_i$ are incomparable with all elements of $C_j$ for $i \neq j$, and
          \begin{equation*}\label{eq:t-chain-count}
          |\mathcal{C}_t(\mathcal{L})| = \lfloor M_n^\alpha \rfloor \binom{r-1}{t}.
          \end{equation*}
          This construction is feasible for all sufficiently large $n$: since the middle layer of $B_{n-r+2}$ has size $\Theta(M_n)$ and $0<\alpha<1$, it contains at least $\lfloor M_n^\alpha\rfloor$ elements. Take pairwise incomparable distinct subsets $A_i$ from this middle layer (an antichain by Sperner's theorem). Set $S_0=\emptyset$ and $S_j=\{n,n-1,\dots,n-j+1\}$ for $1\le j\le r-2$, and let
          $$C_i=\{A_i\cup S_j:j=0,1,\dots,r-2\}.$$
          For any $i\neq j$, $A_i$ and $A_j$ are incomparable subsets of $[n-r+2]$, so for any $S,T \subseteq \{n-r+3,\dots,n\}$, $A_i \cup S$ and $A_j \cup T$ are incomparable (inclusion would require $A_i \subseteq A_j$ or $A_j \subseteq A_i$, a contradiction).
    \item[(A3)] $\Delta_t = \max_{x\in\mathcal{L}} |\{C\in\mathcal{C}_t(\mathcal{L}) : x\in C\}| = \binom{r-2}{t-1}$ (constant, independent of $n$).
    \item[(A4)] All subsets of $\mathcal{L}$ are weakly $P$-free: every weak embedding maps chains of $P$ to chains in $\mathcal{L}$, while the maximum chain length in $\mathcal{L}$ is $r-1<h(P)$; hence no weak embedding of $P$ into any subset of $\mathcal{L}$ exists.
\end{itemize}
\end{assumption}

\begin{theorem}\label{thm:main}
Under Assumption \ref{ass:boolean}, for any integer-valued function $l(n)$ satisfying $2 \leq l(n) \leq \lfloor M_n^\alpha \rfloor$:
\begin{enumerate}
    \item For fixed $l\geq2$, $\operatorname{RT}(\mathcal{B}; n, P, l, t) \geq \binom{r-1}{t} (l-1)$ for all sufficiently large $n$;
    \item For $l(n)\to\infty$ as $n\to\infty$, $\operatorname{RT}(\mathcal{B}; n, P, l(n), t) \geq (1-o(1)) \binom{r-1}{t} l(n)$ for sufficiently large $n$, where the $o(1)$ term tends to $0$ as $n\to\infty$.
\end{enumerate}
In particular:
\begin{itemize}
    \item For $l(n)$ in the above range with $l(n)=\omega(n^c)$ for any fixed $c>0$, $\operatorname{RT}(\mathcal{B}; n, P, l(n), t) = \omega(n^c)$;
    \item For $l(n)=\lfloor M_n^\beta\rfloor$ with $0<\beta\leq\alpha$, $\operatorname{RT}(\mathcal{B}; n, P, l(n), t) = \Omega(2^{\beta n} n^{-\beta/2})$.
\end{itemize}
Moreover, the same lower bounds hold for the strong Boolean Ramsey-Turán number with the corresponding parameter $l$ or $l(n)$, since every subset constructed in Assumption \ref{ass:boolean}(A2) has height at most $r-1<h(P)$ and hence contains no strong copy of $P$.
\end{theorem}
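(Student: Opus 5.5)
Since the paper mentions the Chernoff bound, the authors may intend a random-sampling argument. I plan instead a direct deterministic construction: take a sub-family of the disjoint-chain poset $\mathcal{L}$ from Assumption~\ref{ass:boolean}(A2). Given $l$ (or $l(n)$) with $2\le l\le \lfloor M_n^\alpha\rfloor$, set $Q=\bigcup_{i=1}^{l-1} C_i$. Each $C_i$ is one of the pairwise mutually incomparable $(r-1)$-chains $\{A_i\cup S_j\}$. I will then verify the three conditions of Definition~\ref{defi-BRTn_combined} and count the $t$-chains.

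Condition (1) holds because $Q\subseteq\mathcal{L}\subseteq B_n$, which is feasible for large $n$ by (A2). Condition (2) follows from (A4): every subset of $\mathcal{L}$ is weakly $P$-free. For the strong version, note that a strong copy of $P$ is in particular a weak copy; alternatively, $h(Q)=r-1<r=h(P)$ and any embedding maps an $r$-chain of $P$ injectively onto an $r$-chain. Hence $Q$ is strongly $P$-free too.

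Condition (3) needs $w(Q)<l$, and this is the one step requiring a real argument. Elements from different $C_i$ are incomparable, while elements within one $C_i$ are pairwise comparable. So an antichain contains at most one element of each $C_i$, giving $w(Q)\le l-1$, with equality attained by $\{A_1,\dots,A_{l-1}\}$.

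For the count, a $t$-chain in $Q$ must lie inside a single $C_i$, since elements of different chains are incomparable. Each $C_i$ is totally ordered with $r-1\ge t$ elements, so
\[
|\mathcal{C}_t(Q)| = (l-1)\binom{r-1}{t}.
\]
This proves item 1 directly, and also item 2, since $(l-1)=(1-1/l)\,l=(1-o(1))\,l$ when $l(n)\to\infty$.

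The "in particular" bullets then follow by simple arithmetic. If $l(n)=\omega(n^c)$, the bound $\binom{r-1}{t}(l(n)-1)$ is $\omega(n^c)$. If $l=\lfloor M_n^\beta\rfloor$, then since $M_n=\Theta(2^n/\sqrt n)$ we get $M_n^\beta=\Theta(2^{\beta n}n^{-\beta/2})$, and $\lfloor M_n^\beta\rfloor-1\ge \tfrac12 M_n^\beta$ for large $n$ by (A1). The constraint $\beta\le\alpha$ guarantees $l\le\lfloor M_n^\alpha\rfloor$, so the construction is available. The strong statements follow identically from the strong-freeness remark above, or from Theorem~\ref{th-relation}: the weak lower bound is at most the strong number.
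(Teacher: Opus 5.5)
Your proof is correct. For part (i) and for the strong-version remark it is the same as the paper's argument: the paper also takes $l-1$ whole parent chains, computes the width as $l-1$, and counts $(l-1)\binom{r-1}{t}$ $t$-chains.

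The difference is in part (ii). There the paper does not reuse the deterministic construction. It includes each of the $M=\lfloor M_n^\alpha\rfloor$ parent chains independently with probability $p_n=(1-\delta_n)l(n)/M$, where $\delta_n=l(n)^{-1/3}$. It then uses both tails of the Chernoff bound to show that, with positive probability, the number $N$ of selected chains satisfies $(1-2\delta_n)l(n)\le N<l(n)$. This yields $\operatorname{RT}(\mathcal{B};n,P,l(n),t)\ge(1-2l(n)^{-1/3})\binom{r-1}{t}l(n)$.

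You instead note that the hypothesis $l(n)\le\lfloor M_n^\alpha\rfloor$ already guarantees $l(n)-1$ parent chains to pick. That makes the probabilistic step unnecessary and gives several advantages:
\begin{itemize}
\item You get exactly $\binom{r-1}{t}(l(n)-1)=(1-1/l(n))\binom{r-1}{t}l(n)$, a better error term than the paper's $1-2l(n)^{-1/3}$.
\item You avoid all the Chernoff estimates.
\item Your value attains the upper bound of Corollary~\ref{cor:tightness} for subsets of $\mathcal{L}$, so nothing larger can be extracted from $\mathcal{L}$.
\end{itemize}
The random selection buys nothing here beyond what your deterministic choice already gives. Your alternative justification of the strong bounds via Theorem~\ref{th-relation} is also valid: lower bounds transfer because $\operatorname{RT}\le\operatorname{RT}^{\sharp}$. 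It is slightly more economical than re-verifying strong $P$-freeness.
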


\section{Preliminary}

Ramsey theory
is a branch of mathematics that studies the conditions of when a combinatorial object necessarily contains some smaller given objects; see \cite{Ramsey30}. 
Ramsey theory has important applications in algebra, geometry, logic, ergodic theory, theoretical computer science, information theory, number theory, and set theory; see the book \cite{GRS90} and survey paper \cite{Rosta04}.

A \emph{partially-ordered set}, or a \emph{poset}, is a pair $(P,\leq )$,
where $P$ is a set and $\leq $ is a relation on $P$
that is reflexive, anti-symmetric, and transitive.
When the relation $\leq$ is clear from the context, we simply write $P$ as a poset.
A pair $x, y\in P$ is \emph{comparable} if $x\leq y$ or $y\leq x$,
and a \emph{$t$-chain} is a set of $t$ distinct pairwise comparable elements. The \emph{$n$-antichain}, denoted $A_n$,
is the poset in which any two elements are not comparable.
Let $(P, \leq_P)$ and $(Q, \leq_Q)$ be posets. We say $Q \subseteq P$ (i.e., $Q$ is a subposet of $P$) if $Q$ is a subset of the underlying set of $P$ (set-theoretic inclusion) and for all $x, y \in Q$, $x \leq_Q y$ if and only if $x \leq_P y$; this corresponds to the special case of a strong embedding via the identity map. For posets $P$ and $Q$, an injection $f : P\to Q$ is a \emph{weak embedding} if $f(x)\leq f(y)$ whenever $x\leq y$, in which case $f(P)$ is called a copy of $P$ in $Q$. An injection $f : P\to Q$ is a \emph{strong embedding} if $f(x)\leq f(y)$ if and only if $x\leq y$, and $f(P)$ is then an \emph{induced copy} of $P$ in $Q$. A poset $Q$ is \emph{weakly $P$-free} if it contains no weak copy of $P$, and \emph{strongly $P$-free} if it contains no induced copy of $P$.

A \emph{Boolean lattice of dimension $n$}, denoted $B_n$, is the
power set of an $n$-element ground set $X$ equipped with inclusion relation.
In this paper,
unless otherwise noted,
we use the set $[n]=\{1,2,...,n\}$
as a ground set of the Boolean lattice $B_n$ of dimension $n$.
The \emph{$k$-th level} of $B_n$ is the collection of all $k$-subsets of $[n]$, denoted by ${[n]\choose k}$.
Let $P$ be a poset.
Let $|P|$ denote the number of elements in $P$.
The \emph{height} $h(P)$ of $P$ is
the maximum number $t$ such that $P$ contains a $t$-chain. The \emph{width} $w(P)$ of $P$ is the number $w$ of elements in a
largest set $\{a_1,..., a_w\}$ where no two elements are comparable.
We write $e(P)$ for the largest $m$
such that $P$ cannot be embedded into any $m$ consecutive layers of $B_{n}$
for any $n$.

By a standard inclusion-exclusion count, the number of $t$-chains in a Boolean lattice has the following form.
\begin{proposition}\label{th-La2}
For positive integers \(p\) and \(t\), the number of \(t\)-chains in \(B_p\) is
\begin{equation*}\label{eq:hpt_def}
h_p(t) = \sum_{i=0}^{t-1} (-1)^{t-1-i} \binom{t-1}{i} (i + 2)^p.
\end{equation*}
In particular, for fixed \(t\), we have $h_p(t)=\Theta((t+1)^p)$ as $p\to\infty$.
\end{proposition}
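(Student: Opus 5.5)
We count $t$-chains in $B_p$ directly and then apply inclusion--exclusion. A $t$-chain $S_1\subsetneq S_2\subsetneq\cdots\subsetneq S_t$ in $B_p$ is determined by assigning each element $x\in[p]$ a ``level'' $\lambda(x)\in\{0,1,\dots,t\}$. Here $\lambda(x)=j$ for $1\le j\le t$ means that $x$ first enters the chain at $S_j$, and $\lambda(x)=0$ means $x\notin S_t$. So the maps $\lambda:[p]\to\{0,1,\dots,t\}$ correspond bijectively to weak chains $S_1\subseteq\cdots\subseteq S_t$, which number $(t+1)^p$. The strict chains are exactly those maps $\lambda$ whose image contains every value $2,3,\dots,t$, since $S_{j-1}\ne S_j$ exactly when some $x$ has $\lambda(x)=j$. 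The values $0$ and $1$ are unconstrained, because $S_1=\emptyset$ and $S_t=[p]$ are both allowed.

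Inclusion--exclusion over the set $J\subseteq\{2,\dots,t\}$ of forbidden values (a set of size $t-1$) then gives
\[
\sum_{J}(-1)^{|J|}(t+1-|J|)^p=\sum_{m=0}^{t-1}(-1)^m\binom{t-1}{m}(t+1-m)^p .
\]
Substituting $i=t-1-m$ turns $t+1-m$ into $i+2$ and $(-1)^m$ into $(-1)^{t-1-i}$. Since $\binom{t-1}{m}=\binom{t-1}{i}$, this is exactly the stated formula for $h_p(t)$.

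For the asymptotics with fixed $t$, the term $i=t-1$ contributes $(t+1)^p$. Each remaining term is at most $\binom{t-1}{i}t^p$, so $|h_p(t)-(t+1)^p|\le 2^{t-1}t^p=o((t+1)^p)$. Hence $h_p(t)=(1+o(1))(t+1)^p=\Theta((t+1)^p)$. The combinatorial identity also gives the cruder sandwich
\[
(t+1)^p-(t-1)t^p\le h_p(t)\le (t+1)^p .
\]

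The only real care point is the bookkeeping: checking that the encoding is a bijection, and that the emptiness conditions attach to exactly the $t-1$ values $2,\dots,t$ and not to $0$ or $1$. A sanity check at $t=1$ gives $2^p$, and at $t=2$ gives $3^p-2^p$, the number of strict pairs. Both are correct.
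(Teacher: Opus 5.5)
Your proof is correct and takes the route the paper indicates: the paper gives no written proof beyond calling the formula ``a standard inclusion--exclusion count,'' and your argument is exactly that count, with the two unconstrained values $0,1$ accounting for the $(i+2)^p$. You encode chains by level maps $\lambda:[p]\to\{0,1,\dots,t\}$, run inclusion--exclusion over the $t-1$ values $2,\dots,t$ that must be hit, and then obtain $h_p(t)=(1+o(1))(t+1)^p$ by isolating the $i=t-1$ term and bounding the remaining terms by $2^{t-1}t^p$.
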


Let $g(m,N)$ denote the number of strong embeddings of $B_m$ into $B_N$, and let $a(m)$ denote the number of antichains in $B_m$. Axenovich and Walzer \cite{AW17} gave the following estimate.
\begin{theorem}[\cite{AW17}, Count of Strong Embeddings]\label{number_b_n}
Let $m,N$ be two integers with $m \leq N$. Then
$$
\frac{N !}{(N-m) !}(a(m)-m)^{N-m} \leq g(m, N) \leq \frac{N !}{(N-m) !} a(m)^{N-m}.
$$
Here $a(m)=2^{{m\choose \left\lfloor m/2\right\rfloor}(1+O(\log_2 m/m))}$ by the estimate of Kleitman and Markowsky \cite{KM}.
\end{theorem}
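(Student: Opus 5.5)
The plan is to encode a monotone map $f:B_m\to B_N$ by the family of up-sets it induces on the coordinates of $[N]$, and to count these families. For each $y\in[N]$ put $U_y=\{S\subseteq[m]: y\in f(S)\}$. Since $f$ is order-preserving, $S\subseteq T$ implies $f(S)\subseteq f(T)$, so each $U_y$ is an up-set of $B_m$. Conversely, $f$ is recovered from the tuple $(U_y)_{y\in[N]}$ via $f(S)=\{y: S\in U_y\}$. Up-sets of $B_m$ are in bijection with antichains of $B_m$ (take minimal elements), so there are exactly $a(m)$ choices for each $U_y$. Hence monotone maps $B_m\to B_N$ correspond bijectively to $N$-tuples of up-sets, and it remains to identify which tuples give strong embeddings.

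\textbf{Upper bound.} For a strong embedding $f$ and each $i\in[m]$, we have $\{i\}\not\subseteq[m]\setminus\{i\}$, so $f(\{i\})\not\subseteq f([m]\setminus\{i\})$. Pick $y_i\in f(\{i\})\setminus f([m]\setminus\{i\})$. Then $U_{y_i}$ contains $\{i\}$ but not $[m]\setminus\{i\}$. Because $U_{y_i}$ is an up-set, it contains every superset of $\{i\}$. It also contains no set avoiding $i$, since every such set lies below $[m]\setminus\{i\}$. So $U_{y_i}$ is exactly the principal filter $F_i=\{S: i\in S\}$. The filters $F_1,\dots,F_m$ are distinct, hence so are $y_1,\dots,y_m$.

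Thus every strong embedding arises from the following choices: an ordered $m$-tuple of distinct coordinates $(y_1,\dots,y_m)$, in $N!/(N-m)!$ ways, with $U_{y_i}=F_i$ forced, followed by an arbitrary up-set for each of the remaining $N-m$ coordinates, in at most $a(m)^{N-m}$ ways. This gives $g(m,N)\le \frac{N!}{(N-m)!}a(m)^{N-m}$. Overcounting is harmless here.

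\textbf{Lower bound.} Choose distinct $y_1,\dots,y_m$ and set $U_{y_i}=F_i$. For every other coordinate $y$, choose an up-set different from all of $F_1,\dots,F_m$, which can be done in $a(m)-m$ ways. The resulting $f$ is monotone and satisfies $\{y_i:i\in S\}=f(S)\cap\{y_1,\dots,y_m\}$.

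We check that $f$ is a strong embedding. If $f(S)\subseteq f(T)$, then intersecting with $\{y_1,\dots,y_m\}$ gives $S\subseteq T$. Together with monotonicity this yields $S\subseteq T\iff f(S)\subseteq f(T)$, and injectivity follows.

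We also check that distinct choices give distinct maps. The coordinate $y_i$ is the unique coordinate whose up-set equals $F_i$, since the other coordinates were forbidden from using the $F_j$. So the data $(y_1,\dots,y_m;(U_y)_{y\neq y_i})$ is recoverable from $f$. Hence $g(m,N)\ge\frac{N!}{(N-m)!}(a(m)-m)^{N-m}$.

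The only delicate step is showing that strongness forces the existence of coordinates whose up-sets are exactly the principal filters $F_i$. That argument handles the upper bound and, through the uniqueness requirement, the injectivity of the encoding in the lower bound. The asymptotic formula for $a(m)$ is quoted from Kleitman and Markowsky \cite{KM} and is not reproved here.
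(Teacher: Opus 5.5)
Your proof is correct. The coordinate-wise up-set encoding is a bijection between monotone maps $B_m\to B_N$ and $N$-tuples of up-sets, so $a(m)$ is rightly read as the number of all up-sets, including $\emptyset$ and $B_m$. Strongness forces, for each $i$, a coordinate $y_i$ with $U_{y_i}=F_i$, which gives the upper bound. Forbidding $F_1,\dots,F_m$ on the remaining coordinates is exactly what makes the lower-bound encoding injective and accounts for the $a(m)-m$.

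The paper gives no proof of this statement; it quotes the bound from Axenovich and Walzer and takes the asymptotics of $a(m)$ from Kleitman and Markowsky. So there is no in-paper argument to compare against. Your argument is the natural one that the shape of the bounds reflects.
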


In this paper, we consider a $k$-coloring of $t$-chains,
which is an assignment of $k$ colors,
say $1,2, \dots , k$,
to the $t$-chains of a given poset.
If $t=1$, then it is equivalent with a $k$-coloring of elements of the poset.
A colored poset $P$ is \textit{monochromatic} if all of its $t$-chains
share the same color.

Ramsey theory on posets was initiated by Ne\v{s}et\v{r}il and
R\"{o}dl \cite{NesetrilRodl} focusing on induced copies of posets.
%
Kierstead and Trotter \cite{KiersteadTrotter} considered arbitrary hosts
instead of Boolean lattices and
studied Ramsey type problems in terms
the cardinality, height and width.
For more details, we refer to \cite{DKT91,
KiersteadTrotter, McColm, TrotterRamsey}.
Recently, some
researchers \cite{AW17, CS18,
GundersonRodlSidorenko, JohnstonLuMilans, LT22} focused specifically on
finding induced copies of posets in the Boolean lattice.

A related extremal-set-theoretic line of work studies how many $t$-chains a poset-free family can contain.  Katona~\cite{Katona73} determined the maximum number of $2$-chains in a $C_3$-free family, and Gerbner and Patk\'{o}s~\cite{GerbnerPatkos08} extended this to $t$-chains in $C_{t+k}$-free families for all $t,k\ge1$.  This was later generalized to counting copies of one poset $Q$ in families avoiding another poset $P$~\cite{GerbnerKeszeghPatkos20}.  For chain copies, Gerbner, Methuku, Nagy, Patk\'{o}s, and Vizer~\cite{GerbnerMethukuNagyPatkosVizer19} proved a height-based dichotomy: when $h(P)>t$ the extremal order is the same as for forbidding $C_{t+1}$, while when $h(P)\le t$ it is smaller.  Balogh, Martin, Nagy, and Patk\'{o}s~\cite{BaloghMartinNagyPatkos22} further studied height-two forbidden posets for $t=2$.  Unlike these weak, non-induced results, the present paper takes a Ramsey-coloring viewpoint and focuses on strong induced Boolean-lattice copies.

In 1984, Griggs, Stahl, and Trotter \cite{GST84} initiated the study of $t$-chain structures in posets, and their generalized Lubell inequality forms the foundational framework for our analysis of the $t$-chain Lubell function. 
In \cite{CS18}, Cox and Stolee give the following definition of Poset Ramsey number in terms of pographs and weak embeddings, and the host posets they studied are Boolean lattices. Katona et al. \cite{KMOW25} introduced new definitions for posets, considering both strong and weak embeddings.
\begin{definition}\label{Defi-Wposet}
For a family $\mathcal{Q}=\{Q_{n} : n\geq 1\}$ of host posets
such that $Q_n \subseteq Q_{n+1}$ and $|Q_n|<|Q_{n+1}|$ for each $n$,
and given $k$ posets $P_1,P_2,\ldots,P_k$,
the \emph{weak poset Ramsey number
for $t$-chains},
denoted by $\operatorname{R}_{k,t}(\mathcal{Q}\,|\,P_1,P_2,\ldots,P_k)$,
is the smallest number $N$
such that for any $k$-coloring of $t$-chains in $Q_N \in \mathcal{Q}$,
there is a monochromatic copy of $P_i$ in color $i$ for some $1\leq i\leq k$.
\end{definition}

We use the simplified notation $\operatorname{R}_{k,t}(\mathcal{Q}\,|\,P)$ to denote $\operatorname{R}_{k,t}(\mathcal{Q}\,|\,P_1,P_2,\ldots,P_k)$ in the case where $P_1=\cdots=P_k=P$.
When $t=1$, we abbreviate this to $\operatorname{R}_{k}(\mathcal{Q}\,|\,P_1,P_2,\ldots,P_k)$. Moreover, in the case that both $t=1$ and $k=2$, we use the notation $\operatorname{R}(\mathcal{Q}\,|\,P_1,P_2)$.

Similarly, we define the \emph{strong poset Ramsey number} $\operatorname{R}_{k,t}^{\sharp}(\mathcal{Q}\,|\,P_1,P_2,\ldots,P_k)$ for induced copies of $P_i$.
Additionally,
we adopt analogous abbreviations including
$\operatorname{R}_{k,t}^{\sharp}(\mathcal{Q}\,|\,P)$,
$\operatorname{R}_{k}^{\sharp}(\mathcal{Q}\,|\,P_1, P_2, \dots , P_k)$,
and
$\operatorname{R}^{\sharp}(\mathcal{Q}\,|\,P_1, P_2)$. 
Let $\mathcal{B}=\{B_{n} : n\geq 1\}$ denote
the collection of all Boolean lattices.
Then
$\operatorname{R}_{k,t}(\mathcal{B}\,|\,P_1,P_2,\ldots,P_k)$
is called the \emph{Boolean Ramsey number}, see \cite{CS18}.
Similarly,
the variation of strong embeddings has been considered:
in \cite{AW17, LT22},
$\operatorname{R}_{k,t}^{\sharp}(\mathcal{B}\,|\,P_1,P_2,\ldots,P_k)$
is called simply
the \emph{strong Boolean Ramsey number}.

We now present several established findings regarding poset Ramsey numbers.
In the case of a 2-coloring scheme (i.e., $k=2$) applied element-wise (i.e., $t=1$) to an underlying poset,
the Ramsey number $\operatorname{R}^{\sharp}(\mathcal{B}\,|\,B_{n},B_{m})$
corresponding to Boolean lattices $B_n$ and $B_m$
has been investigated by
Axenovich and Walzer \cite{AW17}, Lu and Thompson \cite{LT22},
Gr\'{o}sz, Methuku, and Tompkins \cite{GMT23}, Bohman and Peng \cite{BP23}, and Walzer \cite{Walzer15}.
Axenovich and Winter \cite{AW23, AW24} as well as Winter \cite{Winter23, WinterIII}
explored the characteristics of the poset Ramsey number $\operatorname{R}^{\sharp}(\mathcal{B}\,| P,B_n)$ for a fixed
poset $P$ and an $n$-dimensional Boolean lattice $B_n$, in the case where $n$ tends to be sufficiently large.
Falgas-Ravry et al.~\cite{FMTZ20}
examined Ramsey properties associated with random posets.

Cox and Stolee \cite[Section 3]{CS18}
studied the case of $k$-coloring of $t$-chains in a Boolean lattice
with $k \geq 3$ and $t \geq 2$,
mainly for particular posets with $t = 2$.
Katona et al. \cite{KMOW25} gave several lower and upper bounds on 
the weak and strong poset Ramsey number for $t$-chains.

\section{Relation between the strong and weak cases}\label{sec_2}

A fundamental result connecting antichains and chain decompositions in finite posets is Dilworth's theorem.

\begin{theorem}[Dilworth's Theorem, {\cite{Di50}}]\label{thm:dilworth}
For every finite poset, the size of a largest antichain equals the minimum number of pairwise disjoint chains whose union is the whole poset.
\end{theorem}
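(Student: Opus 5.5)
The inequality in one direction is immediate. If a finite poset $Q$ is covered by pairwise disjoint chains $C_1,\dots,C_m$ and $A$ is any antichain, then $|A\cap C_j|\le 1$ for each $j$, since two elements of a chain are comparable. Hence $|A|\le m$, so the size of a largest antichain is at most the minimum number of chains in a chain partition. The substance is the reverse inequality: every finite poset $Q$ of width $w=w(Q)$ can be partitioned into $w$ chains. I plan to prove this by strong induction on $|Q|$, following Perles' argument; the case $|Q|\le 1$ is trivial.

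For the induction step, fix a maximal chain $C$ of $Q$, and let $a$ be its minimum and $b$ its maximum. Maximality of $C$ forces $a$ to be a minimal element of $Q$ and $b$ a maximal element. I split into two cases according to the width of $Q\setminus C$.
\begin{itemize}
\item[(i)] If $w(Q\setminus C)\le w-1$, the induction hypothesis partitions $Q\setminus C$ into at most $w-1$ chains. Adding $C$ gives a partition of $Q$ into at most $w$ chains, which is what we need.
\item[(ii)] Otherwise $Q\setminus C$ contains an antichain $A=\{a_1,\dots,a_w\}$ of size $w$, which is a maximum antichain of $Q$ itself. Set
\[
Q^-=\{x\in Q: x\le a_i \text{ for some } i\},\qquad Q^+=\{x\in Q: x\ge a_i \text{ for some } i\}.
\]
\end{itemize}

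In case (ii) the key observations come from the maximality of $A$. First, $Q^-\cup Q^+=Q$: an element comparable to no $a_i$ could be added to $A$, contradicting maximality. Second, $Q^-\cap Q^+=A$: if $a_i\le x\le a_j$ with $x\notin A$, then $a_i<a_j$, which is impossible in an antichain. Third, $b\notin Q^-$, because $b\le a_i$ together with maximality of $b$ would give $b=a_i\in C$, whereas $A\cap C=\emptyset$. Symmetrically $a\notin Q^+$. Hence $|Q^-|,|Q^+|<|Q|$.

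Both $Q^-$ and $Q^+$ have width exactly $w$, since they contain $A$ and sit inside $Q$. By induction each is partitioned into $w$ chains, and each of these chains contains exactly one $a_i$. In $Q^-$, the chain through $a_i$ has $a_i$ as its maximum: any $x$ in that chain lies below some $a_j$, and $x>a_i$ would give $a_i<a_j$. Dually, in $Q^+$ the chain through $a_i$ has $a_i$ as its minimum. Gluing the two chains through $a_i$ for each $i$ yields $w$ chains whose union is $Q^-\cup Q^+=Q$. They are pairwise disjoint because $Q^-\cap Q^+=A$, and each $a_i$ is used exactly once.

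I expect the main obstacle to be case (ii): verifying that $Q^-$ and $Q^+$ are both strictly smaller than $Q$ and that the chains glue correctly at $A$. This is exactly why $C$ is chosen as a maximal chain, since its endpoints are what get excluded from $Q^+$ and $Q^-$. An alternative I would keep in reserve is the bipartite-matching proof: build a bipartite graph with sides $Q'$ and $Q''$ (two copies of $Q$) and edges $x'y''$ for $x<y$. Then apply K\"onig's theorem, using that a matching of size $m$ gives a chain cover with $|Q|-m$ chains and that a minimum vertex cover yields an antichain of the complementary size.
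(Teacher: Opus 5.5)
Your proof is correct: it is Perles' induction, and each step checks out.

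\begin{itemize}
\item Maximality of $C$ makes its endpoints a minimal and a maximal element of $Q$, which is exactly what makes $Q^-$ and $Q^+$ proper subposets.
\item $Q^-\cup Q^+=Q$ and $Q^-\cap Q^+=A$ follow because $A$ is a maximum antichain of $Q$.
\item The glued sets $D_i^-\cup D_i^+$ are chains, since $x\le a_i\le y$, and they are pairwise disjoint, since $Q^-\cap Q^+=A$.
\end{itemize}

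One bookkeeping point is worth stating explicitly. In case (i) you get \emph{at most} $w$ chains, so read the inductive statement as ``at most $w(Q)$ chains.'' When you apply it to $Q^{\pm}$, which have width exactly $w$, the $w$ elements of $A$ lie in distinct chains. Hence the partition has exactly $w$ nonempty chains, each containing exactly one $a_i$, which is what your gluing step needs. This is a matter of phrasing, not a gap.

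There is no proof in the paper to compare with. The paper does not prove Dilworth's theorem; it cites Dilworth's 1950 paper and uses the result as a black box. It only ever uses the direction you identify as the substantive one: width at most $l-1$ gives a partition into at most $l-1$ chains. This is used in the upper bound of Theorem~\ref{th-chain-exact-value} and in Lemma~\ref{lem:universal-decomposition}, and through that lemma in Theorem~\ref{thm:main-universal}. So your argument supplies exactly the ingredient the paper imports.

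Of your two routes, the Perles induction is short and entirely self-contained. The K\"onig-based alternative you keep in reserve is also valid and is constructive: a maximum matching gives a minimum chain partition and a maximum antichain simultaneously, hence a polynomial-time algorithm.
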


We will also use Sperner's theorem for the Boolean lattice.

\begin{theorem}[Sperner's Theorem, {\cite[Theorem 1]{GP19}}]\label{thm:sperner}
If $\mathcal{F}\subseteq 2^{[n]}$ is an antichain, then $|\mathcal{F}|\le \binom{n}{\lfloor n/2\rfloor}$. Moreover, equality holds if and only if $\mathcal{F}=\binom{[n]}{\lfloor n/2\rfloor}$ or $\mathcal{F}=\binom{[n]}{\lceil n/2\rceil}$.
\end{theorem}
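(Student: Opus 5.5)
We would prove Theorem~\ref{thm:sperner} by the standard LYM (Lubell--Yamamoto--Meshalkin) chain-counting argument, followed by an analysis of the equality case.

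\textbf{Step 1 (the LYM inequality).} Consider the $n!$ maximal chains $\emptyset\subset\{x_1\}\subset\{x_1,x_2\}\subset\cdots\subset[n]$ of $B_n$; these correspond bijectively to permutations of $[n]$. A fixed set $F$ with $|F|=k$ lies on exactly $k!(n-k)!$ of them. Since $\mathcal{F}$ is an antichain, each maximal chain contains at most one member of $\mathcal{F}$. Double counting the pairs (maximal chain, member of $\mathcal{F}$ on it) gives
\[
\sum_{F\in\mathcal{F}}|F|!\,(n-|F|)!\le n!,
\qquad\text{i.e.}\qquad
\sum_{F\in\mathcal{F}}\binom{n}{|F|}^{-1}\le 1 .
\]

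\textbf{Step 2 (the bound).} Since $\binom{n}{k}\le\binom{n}{\lfloor n/2\rfloor}$ for every $k$, each term on the left is at least $\binom{n}{\lfloor n/2\rfloor}^{-1}$. Hence $|\mathcal{F}|\,\binom{n}{\lfloor n/2\rfloor}^{-1}\le 1$, which is the claimed bound.

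\textbf{Step 3 (equality forces middle levels and a perfect hitting property).} Suppose $|\mathcal{F}|=\binom{n}{\lfloor n/2\rfloor}$. Then both inequalities above must be tight.
\begin{itemize}
\item Tightness in Step~2 means every $F\in\mathcal{F}$ has $\binom{n}{|F|}$ maximal. So $|F|\in\{\lfloor n/2\rfloor,\lceil n/2\rceil\}$.
\item Tightness in Step~1 means every maximal chain meets $\mathcal{F}$ exactly once.
\end{itemize}
If $n$ is even, the two middle sizes coincide, so $\mathcal{F}$ is contained in the single middle level and has its full size; hence $\mathcal{F}$ is that level and we are done. This leaves the case $n$ odd; write $k=\lfloor n/2\rfloor$.

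\textbf{Step 4 (the odd case; the main step).} Here we use a connectivity argument on level $k$.
\begin{itemize}
\item Suppose some $A\in\mathcal{F}$ has $|A|=k$. Let $A'$ be any $k$-set with $|A\cap A'|=k-1$, and put $B=A\cup A'$, so $|B|=k+1$.
\item $B\notin\mathcal{F}$, because $A\subset B$ and $\mathcal{F}$ is an antichain.
\item Take a maximal chain passing through $A'$ and $B$. By Step~3 it meets $\mathcal{F}$ exactly once, and only at level $k$ or level $k+1$. At level $k+1$ it passes through $B\notin\mathcal{F}$, so it must meet $\mathcal{F}$ at level $k$, i.e.\ $A'\in\mathcal{F}$.
\item The graph on $\binom{[n]}{k}$ whose edges join sets sharing $k-1$ elements (the Johnson graph) is connected. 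Therefore all of $\binom{[n]}{k}$ lies in $\mathcal{F}$. Since $|\mathcal{F}|=\binom{n}{k}$, we get $\mathcal{F}=\binom{[n]}{k}$.
\item Otherwise $\mathcal{F}$ has no $k$-sets, so $\mathcal{F}\subseteq\binom{[n]}{k+1}$. Comparing sizes gives $\mathcal{F}=\binom{[n]}{\lceil n/2\rceil}$.
\end{itemize}

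Conversely, each middle level is clearly an antichain of the stated size, so the equality characterization is complete. The only delicate point is this propagation step in the odd case; it rests entirely on the tightness of LYM, namely that every maximal chain meets $\mathcal{F}$ exactly once.
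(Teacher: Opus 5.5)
The paper gives no proof of this statement: it quotes Sperner's theorem from the Gerbner--Patk\'os reference and uses it as a black box. In fact the paper only ever uses the trivial fact that a single level $\binom{[n]}{j}$ is an antichain, so neither the bound nor the uniqueness part is actually needed there.

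Your proposal is the standard Lubell--Yamamoto--Meshalkin argument, and it is correct and complete.

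\begin{itemize}
\item Equality $|\mathcal{F}|=\binom{n}{\lfloor n/2\rfloor}$ forces equality termwise in Step~2. Hence $|F|\in\{\lfloor n/2\rfloor,\lceil n/2\rceil\}$, by strict unimodality of $k\mapsto\binom{n}{k}$.
\item It also forces every maximal chain to meet $\mathcal{F}$ exactly once.
\item Your Johnson-graph propagation in the odd case is sound. A maximal chain through $A'\subset A\cup A'$ has $A\cup A'\notin\mathcal{F}$ as its only set on level $k+1$, so its unique member of $\mathcal{F}$ must be $A'$.
\item The degenerate case $n=1$ ($k=0$), where no neighbour $A'$ exists, is harmless, since level $0$ consists of a single set.
\end{itemize}
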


To simplify the notation, for a poset $P$ and integers $n,l\in\mathbb{N}$ with $l\ge 1$, we define
\[
S_{\mathrm{weak}}(P;n,l)=\{Q\subseteq B_n:\ Q \text{ is weakly } P\text{-free and } w(Q)<l\}
\]
and
\[
S_{\mathrm{strong}}(P;n,l)=\{Q\subseteq B_n:\ Q \text{ is strongly } P\text{-free and } w(Q)<l\}.
\]
When $n$ and $l$ are clear from the context, we simply write $S_{\mathrm{weak}}(P)$ and $S_{\mathrm{strong}}(P)$.

Using Definition~\ref{defi-BRTn_combined}, we now establish the basic relation between the weak and strong Boolean Ramsey--Tur\'an numbers for $t$-chains. As before, in the special case $t=1$ we abbreviate $\operatorname{RT}(\mathcal{B};n,P,l,1)$ and $\operatorname{RT}^{\sharp}(\mathcal{B};n,P,l,1)$ to $\operatorname{RT}(\mathcal{B};n,P,l)$ and $\operatorname{RT}^{\sharp}(\mathcal{B};n,P,l)$, respectively.

\begin{theorem}\label{th-relation}
For any poset $P$ and integers $n,l,t\in\mathbb{N}$ with $l\ge 1$, we have
\[
\operatorname{RT}(\mathcal{B};n,P,l,t)\le \operatorname{RT}^{\sharp}(\mathcal{B};n,P,l,t).
\]
Moreover, equality holds if and only if there exists a poset $Q_0\subseteq B_n$ such that $Q_0$ is weakly $P$-free, $w(Q_0)<l$, and $|\mathcal{C}_t(Q_0)|=\operatorname{RT}^{\sharp}(\mathcal{B};n,P,l,t)$.
\end{theorem}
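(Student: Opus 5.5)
The inequality will follow from the containment $S_{\mathrm{weak}}(P;n,l)\subseteq S_{\mathrm{strong}}(P;n,l)$. Since both Ramsey--Tur\'an numbers are maxima of the same quantity $|\mathcal{C}_t(Q)|$ over these two families, the containment immediately gives the bound. Both families are nonempty, because the empty poset lies in each, and both are finite, because $B_n$ is finite. Hence both maxima exist.

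To prove the containment, I argue by contraposition: every strong embedding is a weak embedding. If $f:P\to Q$ is an injection with $f(x)\le f(y)\iff x\le y$, then in particular $x\le y$ implies $f(x)\le f(y)$. So if $Q$ contains an induced copy of $P$, it also contains a weak copy of $P$. Equivalently, a weakly $P$-free $Q$ is strongly $P$-free. The width condition $w(Q)<l$ and the condition $Q\subseteq B_n$ are identical in the two definitions, so $S_{\mathrm{weak}}\subseteq S_{\mathrm{strong}}$, and therefore $\operatorname{RT}(\mathcal{B};n,P,l,t)\le\operatorname{RT}^{\sharp}(\mathcal{B};n,P,l,t)$.

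For the equality characterization I will prove both directions.

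\emph{Equality implies existence of $Q_0$.} Suppose $\operatorname{RT}=\operatorname{RT}^\sharp$. The weak maximum is attained, since $S_{\mathrm{weak}}$ is finite and nonempty, by some $Q_0\in S_{\mathrm{weak}}$. This $Q_0$ is weakly $P$-free, satisfies $w(Q_0)<l$, and has $|\mathcal{C}_t(Q_0)|=\operatorname{RT}=\operatorname{RT}^\sharp$.

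\emph{Existence of $Q_0$ implies equality.} Suppose such a $Q_0$ exists. Then $Q_0\in S_{\mathrm{weak}}$, so
\[
\operatorname{RT}\ge |\mathcal{C}_t(Q_0)|=\operatorname{RT}^\sharp .
\]
Combined with the inequality already proved, this gives equality.

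There is no real obstacle here. The only point needing care is that the maxima are attained, which is guaranteed by finiteness and by the empty poset lying in both families.
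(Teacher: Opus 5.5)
Your proposal is correct and follows essentially the same route as the paper: the containment $S_{\mathrm{weak}}(P;n,l)\subseteq S_{\mathrm{strong}}(P;n,l)$ because every strong embedding is a weak embedding, finiteness of $B_n$ to ensure the maxima exist, and the two-direction argument for the equality characterization. Your remark that the empty poset lies in both families (tacitly assuming $P\neq\emptyset$) supplies the nonemptiness that the paper's proof leaves implicit when it invokes only finiteness.
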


\begin{proof}
We first prove that $S_{\mathrm{weak}}(P;n,l)\subseteq S_{\mathrm{strong}}(P;n,l)$. Let $Q\in S_{\mathrm{weak}}(P;n,l)$. Then $Q\subseteq B_n$, $w(Q)<l$, and $Q$ is weakly $P$-free. Suppose, for a contradiction, that $Q\notin S_{\mathrm{strong}}(P;n,l)$. Since the conditions $Q\subseteq B_n$ and $w(Q)<l$ are already satisfied, it follows that $Q$ is not strongly $P$-free. Hence there exists a strong embedding $f:P\to Q$. By definition, a strong embedding is in particular order-preserving, so whenever $x\le_P y$ we have $f(x)\le_Q f(y)$. Therefore $f$ is also a weak embedding of $P$ into $Q$, contradicting the assumption that $Q$ is weakly $P$-free. This proves that every member of $S_{\mathrm{weak}}(P;n,l)$ also belongs to $S_{\mathrm{strong}}(P;n,l)$.

Since $B_n$ is finite, it has only finitely many subposets. Consequently, both $S_{\mathrm{weak}}(P;n,l)$ and $S_{\mathrm{strong}}(P;n,l)$ are finite families of finite posets, and the maxima appearing in the definitions of $\operatorname{RT}(\mathcal{B};n,P,l,t)$ and $\operatorname{RT}^{\sharp}(\mathcal{B};n,P,l,t)$ are well defined. Because $S_{\mathrm{weak}}(P;n,l)\subseteq S_{\mathrm{strong}}(P;n,l)$ and both Ramsey--Tur\'an numbers maximize the same quantity $|\mathcal{C}_t(Q)|$, we obtain
\[
\operatorname{RT}(\mathcal{B};n,P,l,t)
=
\max_{Q\in S_{\mathrm{weak}}(P;n,l)} |\mathcal{C}_t(Q)|
\le
\max_{Q\in S_{\mathrm{strong}}(P;n,l)} |\mathcal{C}_t(Q)|
=
\operatorname{RT}^{\sharp}(\mathcal{B};n,P,l,t).
\]

We next characterize the equality case. Assume first that $\operatorname{RT}(\mathcal{B};n,P,l,t)=\operatorname{RT}^{\sharp}(\mathcal{B};n,P,l,t)$. By the definition of $\operatorname{RT}(\mathcal{B};n,P,l,t)$, there exists some $Q_0\in S_{\mathrm{weak}}(P;n,l)$ such that $|\mathcal{C}_t(Q_0)|=\operatorname{RT}(\mathcal{B};n,P,l,t)$. Since the weak and strong Ramsey--Tur\'an numbers are assumed to be equal, it follows that $|\mathcal{C}_t(Q_0)|=\operatorname{RT}^{\sharp}(\mathcal{B};n,P,l,t)$. Hence $Q_0$ is weakly $P$-free, satisfies $w(Q_0)<l$, and attains the maximum defining the strong Boolean Ramsey--Tur\'an number.

Conversely, suppose that there exists a poset $Q_0\subseteq B_n$ such that $Q_0$ is weakly $P$-free, $w(Q_0)<l$, and $|\mathcal{C}_t(Q_0)|=\operatorname{RT}^{\sharp}(\mathcal{B};n,P,l,t)$. Then $Q_0\in S_{\mathrm{weak}}(P;n,l)$, and therefore the definition of $\operatorname{RT}(\mathcal{B};n,P,l,t)$ gives
\[
\operatorname{RT}(\mathcal{B};n,P,l,t)\ge |\mathcal{C}_t(Q_0)|=\operatorname{RT}^{\sharp}(\mathcal{B};n,P,l,t).
\]
Combining this with the inequality already proved, we conclude that $\operatorname{RT}(\mathcal{B};n,P,l,t)=\operatorname{RT}^{\sharp}(\mathcal{B};n,P,l,t)$.

This completes the proof.
\end{proof}

Let $C_k=(\{a_1,a_2,\dots,a_k\},\le)$ be the $k$-element chain with
$a_1\le a_2\le \cdots \le a_k$, where $k\ge 1$.

We now show that for chain posets the weak and strong Boolean Ramsey--Tur\'an numbers coincide.

\begin{theorem}\label{th-chain-equality}
For any integers $k,n,l,t\in\mathbb{N}$ with $k\ge 1$ and $l\ge 1$, we have
\[
\operatorname{RT}(\mathcal{B};n,C_k,l,t)=\operatorname{RT}^{\sharp}(\mathcal{B};n,C_k,l,t).
\]
\end{theorem}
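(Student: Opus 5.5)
The plan is to show that for a chain $C_k$ weak and strong $C_k$-freeness coincide for every $Q\subseteq B_n$. Then $S_{\mathrm{weak}}(C_k;n,l)=S_{\mathrm{strong}}(C_k;n,l)$, so the two maxima in Definition~\ref{defi-BRTn_combined} are taken over the same family and must agree. By Theorem~\ref{th-relation} we already have the inequality $\operatorname{RT}\le\operatorname{RT}^{\sharp}$ (equivalently $S_{\mathrm{weak}}\subseteq S_{\mathrm{strong}}$), so only the reverse inclusion $S_{\mathrm{strong}}(C_k;n,l)\subseteq S_{\mathrm{weak}}(C_k;n,l)$ remains.

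For the reverse inclusion, take $Q$ that is not weakly $C_k$-free, i.e.\ there is a weak embedding $f:C_k\to Q$. We want to show that $f$ is already a strong embedding. Since $f$ is injective and order-preserving, $a_1< a_2<\cdots<a_k$ gives $f(a_1)<f(a_2)<\cdots<f(a_k)$, with strict inequalities by injectivity. It remains to check the ``only if'' direction: $f(a_i)\le f(a_j)$ must imply $a_i\le a_j$.

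\begin{itemize}
\item If $i\le j$, then $a_i\le a_j$ holds trivially.
\item If $i>j$, then $f(a_j)<f(a_i)$. Together with $f(a_i)\le f(a_j)$, antisymmetry gives $f(a_i)=f(a_j)$, which contradicts injectivity. So this case cannot occur.
\end{itemize}

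Hence $f$ is a strong embedding and $Q$ is not strongly $C_k$-free. Contrapositively, every strongly $C_k$-free $Q$ is weakly $C_k$-free. Combined with the width and containment conditions, which are identical in both families, this gives the equality of the families.

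Finally, I would conclude either directly from the equality of families, or through the equality criterion of Theorem~\ref{th-relation}: a maximizer $Q_0$ for $\operatorname{RT}^{\sharp}$ is strongly, hence weakly, $C_k$-free. The degenerate cases cause no trouble. If $k=1$, no poset in $B_n$ other than the empty one is free in either sense. If $l=1$, only $Q=\emptyset$ qualifies. Both cases are covered by the same argument. The only real point is the observation that a chain's comparability graph is complete, so there are no non-relations to preserve; I expect no genuine obstacle.
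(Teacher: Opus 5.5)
Your proposal is correct and matches the paper's proof: you show that every weak embedding of $C_k$ is strong via antisymmetry and injectivity, deduce $S_{\mathrm{weak}}(C_k;n,l)=S_{\mathrm{strong}}(C_k;n,l)$, and conclude that the two maxima agree. One small wording point: the inequality $\operatorname{RT}\le\operatorname{RT}^{\sharp}$ is not literally equivalent to the inclusion $S_{\mathrm{weak}}\subseteq S_{\mathrm{strong}}$, but that inclusion is exactly what the proof of Theorem~\ref{th-relation} establishes, so nothing is lost.
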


\begin{proof}
We first show that, for every subposet $Q\subseteq B_n$, a map $f:C_k\to Q$ is a weak embedding if and only if it is a strong embedding.

Indeed, every strong embedding is, by definition, a weak embedding. Conversely, let $f:C_k\to Q$ be a weak embedding. We only need to prove the reverse implication in the order preservation. Assume that $f(a_i)\le_Q f(a_j)$. If $a_i\nleq a_j$, then since $C_k$ is a chain, we must have $a_j\le a_i$. As $f$ is weakly order-preserving, this implies $f(a_j)\le_Q f(a_i)$. Hence both $f(a_i)\le_Q f(a_j)$ and $f(a_j)\le_Q f(a_i)$ hold, so by antisymmetry in $Q$ we get $f(a_i)=f(a_j)$. Since $f$ is injective, this is impossible unless $i=j$. Therefore $a_i\le a_j$. It follows that $f$ is a strong embedding. Thus weak and strong embeddings of $C_k$ are the same.

We next deduce that
$S_{\mathrm{weak}}(C_k;n,l)=S_{\mathrm{strong}}(C_k;n,l)$.
As shown in the proof of Theorem~\ref{th-relation}, one always has
$S_{\mathrm{weak}}(P;n,l)\subseteq S_{\mathrm{strong}}(P;n,l)$ for every poset $P$.
For the reverse inclusion, let $Q\in S_{\mathrm{strong}}(C_k;n,l)$. Then $Q\subseteq B_n$, $w(Q)<l$, and $Q$ is strongly $C_k$-free. If $Q$ were not weakly $C_k$-free, there would exist a weak embedding of $C_k$ into $Q$. By the equivalence proved above, this embedding would in fact be a strong embedding, contradicting the assumption that $Q$ is strongly $C_k$-free. Hence $Q$ is weakly $C_k$-free, and therefore $Q\in S_{\mathrm{weak}}(C_k;n,l)$. This proves the reverse inclusion.

Since the two admissible families are identical and both Ramsey--Tur\'an numbers maximize the same quantity $|\mathcal{C}_t(Q)|$, we conclude that
\[
\operatorname{RT}(\mathcal{B};n,C_k,l,t)=\operatorname{RT}^{\sharp}(\mathcal{B};n,C_k,l,t).
\]
This completes the proof.
\end{proof}

Recall that $\operatorname{RT}^{\sharp}(\mathcal{B}; n, P, l)=\operatorname{RT}^{\sharp}(\mathcal{B}; n, P, l, 1)$. By Theorem~\ref{th-chain-equality}, the weak and strong Boolean Ramsey--Tur\'an numbers coincide for chain posets, so it suffices to consider the strong case. When $t=1$, we have $|\mathcal{C}_1(Q)|=|Q|$, and hence the Ramsey--Tur\'an number is simply the maximum size of an admissible subposet of $B_n$. We now determine this quantity exactly.
\begin{theorem}\label{th-chain-exact-value}
Let $l \in \mathbb{N}$ with $l\geq1$, let $k\geq2$, and let $n\ge k-2$. If
\[
 l-1\leq \binom{n-k+2}{\left\lfloor (n-k+2)/2\right\rfloor},
\]
then
\[
\operatorname{RT}(\mathcal{B}; n, C_k, l) =\operatorname{RT}^{\sharp}(\mathcal{B}; n, C_k, l) = (k-1)(l - 1).
\]
In particular, the same conclusion holds whenever $n\ge k+l-3$.
\end{theorem}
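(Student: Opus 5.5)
By Theorem~\ref{th-chain-equality} it suffices to treat the strong number. For $t=1$ the quantity is simply $\max|Q|$ over $Q\subseteq B_n$ that are $C_k$-free with $w(Q)\le l-1$. The plan is to prove a matching upper bound and lower bound.

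\textbf{Upper bound.} Let $Q$ be admissible. By Dilworth's theorem (Theorem~\ref{thm:dilworth}), $Q$ is a union of $w(Q)\le l-1$ pairwise disjoint chains. Since $Q$ is $C_k$-free, each of these chains has at most $k-1$ elements. Hence $|Q|\le (k-1)(l-1)$. If $l=1$, then $w(Q)<1$ forces $Q=\emptyset$, which agrees with the formula.

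\textbf{Lower bound.} I will construct $l-1$ pairwise incomparable chains, each with $k-1$ elements, modelled on Assumption~\ref{ass:boolean}(A2). Put $m=n-k+2\ge 0$. By hypothesis, the middle layer $\binom{[m]}{\lfloor m/2\rfloor}$ has at least $l-1$ members; choose distinct ones $A_1,\dots,A_{l-1}$. For $0\le j\le k-2$, let $S_j$ consist of the $j$ largest elements of $[n]$, so that $S_j\subseteq\{m+1,\dots,n\}$ since $k-2=n-m$. Define $C_i=\{A_i\cup S_j : 0\le j\le k-2\}$. Each $C_i$ is a chain with $k-1$ distinct elements.

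For $i\ne i'$, any $A_i\cup S$ and $A_{i'}\cup T$ are incomparable: comparability would force $A_i$ and $A_{i'}$ to be comparable after intersecting with $[m]$, but they are distinct sets of the same size. Let $Q=\bigcup_i C_i$, so $|Q|=(k-1)(l-1)$.

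We must check both admissibility conditions.
\begin{itemize}
\item \emph{Height.} Any chain in $Q$ lies inside a single $C_i$, so $h(Q)=k-1$ and $Q$ contains no $C_k$.
\item \emph{Width.} An antichain contains at most one element from each $C_i$, so $w(Q)\le l-1<l$.
\end{itemize}
In the degenerate case $m=0$ (where $n=k-2$), the bound gives $l-1\le 1$. Then $l\le 2$, and a single chain $\emptyset\subset\{n\}\subset\cdots$ still works, since the construction needs only one $A_1=\emptyset$.

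\textbf{Final claim.} It remains to show that $n\ge k+l-3$ implies $l-1\le\binom{m}{\lfloor m/2\rfloor}$. Here $m\ge l-1$, and for $m\ge 1$ the middle binomial coefficient satisfies $\binom{m}{\lfloor m/2\rfloor}\ge m$. For $m=0$ we have $l\le 1$, and the inequality is trivial.

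The only slightly delicate points are the incomparability check across chains and the edge cases $m=0$ and $l=1$; the remainder is routine.
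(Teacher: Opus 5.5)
Your proposal is correct and follows the paper's proof essentially step for step: you use the Dilworth decomposition into at most $l-1$ chains of size at most $k-1$ for the upper bound, and for the lower bound the same construction of $l-1$ pairwise incomparable $(k-1)$-chains $A_i\cup S_j$, with the $A_i$ taken from the middle layer of $B_{n-k+2}$, finishing with the same inequality $\binom{m}{\lfloor m/2\rfloor}\ge m\ge l-1$. Your separate treatment of $m=0$ is harmless, since the general construction already covers it with $A_1=\emptyset$, and taking $S_j$ to be the top $j$ elements of $[n]$ rather than $\{m+1,\dots,m+j\}$ makes no difference.
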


\begin{proof}
If $l=1$, then $w(Q)<1$ forces every admissible poset $Q$ to be empty, and the claimed value is $0$. Hence we may assume $l\ge2$.

We first prove the upper bound. Let $Q\subseteq B_n$ be strongly $C_k$-free with $w(Q)<l$. By Dilworth's theorem, $Q$ can be partitioned into at most $l-1$ pairwise disjoint chains, say $D_1,\dots,D_m$, where $m\le l-1$. Since $Q$ is strongly $C_k$-free, no chain in $Q$ can have size at least $k$, for otherwise such a chain would contain a strong copy of $C_k$. Thus each $D_i$ has size at most $k-1$. Therefore
\[
|Q|=\sum_{i=1}^m |D_i|\le m(k-1)\le (l-1)(k-1).
\]
Since $Q$ was arbitrary, it follows that $\operatorname{RT}^{\sharp}(\mathcal{B}; n, C_k, l)\le (k-1)(l-1)$.

For the lower bound, put $m=n-k+2$. By the assumption on $n$ and $l$, the middle layer of $B_m$ contains at least $l-1$ elements. Choose distinct sets
\[
A_1,A_2,\dots,A_{l-1}\in \binom{[m]}{\lfloor m/2\rfloor}.
\]
These sets are pairwise incomparable. Let $S_0=\emptyset$, and for $1\le j\le k-2$ let
\[
S_j=\{m+1,m+2,\dots,m+j\}\subseteq [n].
\]
For each $i\in[l-1]$, define
\[
C_i=\{A_i\cup S_j:j=0,1,\dots,k-2\}.
\]
Each $C_i$ is a chain of length $k-1$. Moreover, if $i\ne j$, then every element of $C_i$ is incomparable with every element of $C_j$: indeed, an inclusion between $A_i\cup S_a$ and $A_j\cup S_b$ would imply $A_i\subseteq A_j$ or $A_j\subseteq A_i$, contradicting the choice of $A_i$ and $A_j$ from one layer.

Set $Q=\bigcup_{i=1}^{l-1}C_i$. Then $Q$ is the disjoint union of $l-1$ pairwise incomparable chains, each of size $k-1$. Since no chain in $Q$ has size $k$, the poset $Q$ is strongly $C_k$-free. Also, any antichain in $Q$ contains at most one element from each $C_i$, and choosing one element from each $C_i$ gives an antichain of size $l-1$. Hence $w(Q)=l-1<l$. Finally, $|Q|=(l-1)(k-1)$, so
$\operatorname{RT}^{\sharp}(\mathcal{B}; n, C_k, l)\ge (k-1)(l-1)$.
Combining the upper and lower bounds gives the desired strong equality, and Theorem~\ref{th-chain-equality} gives the weak equality.

Finally, if $n\ge k+l-3$, then $m=n-k+2\ge l-1$, and hence
\[
\binom{m}{\lfloor m/2\rfloor}\ge m\ge l-1
\]
for $l\ge2$. Thus the displayed condition is satisfied. This completes the proof.
\end{proof}

\section{Upper bounds for the poset Ramsey-Tur\'an number}\label{sec3}

We begin with a simple monotonicity property of strong embeddings, which will be used repeatedly in the sequel.

\begin{proposition}\label{prop:embedding-monotone}
Let $P$ be a finite poset, and let $f:P\to Q$ be a strong embedding of $P$ into a poset $Q$. Then $w(Q)\ge w(P)$ and $h(Q)\ge h(P)$.
\end{proposition}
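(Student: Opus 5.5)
The plan is to transport a witness antichain and a witness chain of $P$ into $Q$ through $f$, checking only that $f$ keeps their sizes and their (in)comparabilities. Since $f$ is injective, images of distinct elements stay distinct. The one point that needs care is that incomparability is preserved, and this is exactly where the strong (``if and only if'') condition is used.

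For the width, I will fix an antichain $\{a_1,\dots,a_w\}\subseteq P$ with $w=w(P)$ and consider $\{f(a_1),\dots,f(a_w)\}\subseteq Q$. Injectivity of $f$ shows these are $w$ distinct elements. Now suppose $f(a_i)\le_Q f(a_j)$ for some $i\ne j$. Because $f$ is a strong embedding, this gives $a_i\le_P a_j$, which contradicts the incomparability of $a_i$ and $a_j$. The same argument with $i,j$ swapped handles the other direction, so the image is an antichain of size $w(P)$ and $w(Q)\ge w(P)$. This is the step I regard as the main (if mild) obstacle. It is the only place where the reverse implication of a strong embedding is needed, and the statement fails for weak embeddings: a weak embedding may send an antichain onto a chain.

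For the height, I will take a chain $x_1<_P x_2<_P\cdots<_P x_h$ with $h=h(P)$. Order preservation gives $f(x_1)\le_Q\cdots\le_Q f(x_h)$, and injectivity makes these inequalities strict. Hence the image is an $h$-chain in $Q$, and $h(Q)\ge h(P)$. This direction needs only the weak property.

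Finally, I will note the trivial case $P=\emptyset$, where both inequalities hold vacuously because $w(\emptyset)=h(\emptyset)=0$.
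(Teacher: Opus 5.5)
Your proposal is correct and follows essentially the same route as the paper: push a maximum antichain forward through $f$, using the ``only if'' direction of the strong embedding to keep its elements pairwise incomparable, and push a maximum chain forward using order preservation. Your explicit appeal to injectivity, both for distinctness of the images and for strictness of the chain, fills in a detail that the paper's proof leaves implicit.
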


\begin{proof}
Let $A\subseteq P$ be an antichain of maximum size, so $|A|=w(P)$. Since $f$ is a strong embedding, incomparable elements of $P$ are mapped to incomparable elements of $Q$. Hence $f(A)$ is an antichain in $Q$ of size $w(P)$, and therefore $w(Q)\ge w(P)$.

Similarly, let $C\subseteq P$ be a chain of maximum size, so $|C|=h(P)$. Since $f$ is order-preserving, the image $f(C)$ is a chain in $Q$ of size $h(P)$. Thus $h(Q)\ge h(P)$.
\end{proof}

We next record a universal chain decomposition for all admissible subposets of the Boolean lattice. This is an immediate consequence of Dilworth's theorem.

\begin{lemma}\label{lem:universal-decomposition}
Let $P$ be a finite poset and let $l$ be a positive integer. If $Q\subseteq B_n$ is a poset with $w(Q)<l$, then there exist pairwise disjoint chains $D_1,D_2,\dots,D_{l-1}$, some of which may be empty, such that
\[
Q=D_1\cup D_2\cup\cdots\cup D_{l-1},
\]
and each $D_i$ satisfies $|D_i|\le n+1$.
\end{lemma}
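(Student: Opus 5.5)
The plan is to apply Dilworth's theorem (Theorem~\ref{thm:dilworth}) directly to the finite poset $Q$ and then pad the resulting decomposition with empty chains. The chain-length bound will come from the structure of $B_n$ itself, not from $P$. In fact the hypothesis on $P$ is not needed: the lemma holds for every $Q\subseteq B_n$ with $w(Q)<l$.

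First, if $Q=\emptyset$, take all $D_i$ empty; this also covers $l=1$, where $w(Q)<1$ forces $Q=\emptyset$ and the union over an empty index range is empty. Otherwise $w(Q)=m$ with $1\le m\le l-1$. By Theorem~\ref{thm:dilworth}, $Q$ is partitioned into exactly $m$ pairwise disjoint chains $D_1,\dots,D_m$. Setting $D_{m+1}=\cdots=D_{l-1}=\emptyset$ gives $l-1$ pairwise disjoint chains whose union is $Q$.

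Second, I bound each chain length. Any chain $D$ in $B_n$ is a family of subsets $X_1\subsetneq X_2\subsetneq\cdots\subsetneq X_s$ of $[n]$, so the cardinalities satisfy $0\le |X_1|<|X_2|<\cdots<|X_s|\le n$. These are $s$ distinct integers in $\{0,1,\dots,n\}$, hence $s\le n+1$. Since the order on $Q$ is inherited from $B_n$ (as $Q$ is a subposet), each $D_i$ is also a chain in $B_n$, and therefore $|D_i|\le n+1$.

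There is no real obstacle here. The only points needing care are the degenerate case $l=1$ and the observation that Dilworth gives exactly $w(Q)\le l-1$ chains, so padding with empty chains produces precisely $l-1$ of them.
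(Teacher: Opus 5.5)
Your proposal is correct and matches the paper's proof: apply Dilworth's theorem to $Q$, pad with $(l-1)-w(Q)$ empty chains, and use that every chain in $B_n$ has at most $n+1$ elements. Your distinct-cardinalities argument for that last bound, and your remark that the hypothesis on $P$ is never used, fill in details the paper leaves implicit.
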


\begin{proof}
Since $w(Q)<l$ and $w(Q)$ is a nonnegative integer, we have $w(Q)\le l-1$. By Dilworth's theorem, $Q$ can be partitioned into $w(Q)$ pairwise disjoint chains. Adding $(l-1)-w(Q)$ empty chains if necessary, we obtain pairwise disjoint chains $D_1,D_2,\dots,D_{l-1}$ whose union is $Q$.

Finally, each $D_i$ is a chain in $B_n$, and every chain in $B_n$ has size at most $n+1$. Hence $|D_i|\le n+1$ for every $i$.
\end{proof}

Using the universal chain decomposition from Lemma~\ref{lem:universal-decomposition}, we now derive a universal upper bound for the strong Boolean Ramsey--Tur\'an number.

\newtheorem*{mainthm2}{\rm\bf Theorem~\ref{thm:main-universal}}
\begin{mainthm2}[Restated]
Let $P$ be a finite poset and let $l$ be a positive integer. For every fixed positive integer $t$, we have
\[
\operatorname{RT}(\mathcal{B}; n, P, l, t)\le
\operatorname{RT}^{\sharp}(\mathcal{B}; n, P, l, t)\le
\begin{cases}
0, & \text{if } l=1,\\[1ex]
\displaystyle \sum_{i=1}^{\min(t,l-1)} \binom{l-1}{i}\binom{i(n+1)}{t}, & \text{if } l\ge 2.
\end{cases}
\]
Moreover, as $n\to\infty$ with $t$ and $l$ fixed,
\[
\operatorname{RT}^{\sharp}(\mathcal{B}; n, P, l, t)
\le
\left(\frac{1}{t!}\sum_{i=1}^{\min(t,l-1)}\binom{l-1}{i}i^t\right)n^t+O(n^{t-1}).
\]
In particular, when $t=1$, this becomes
\[
\operatorname{RT}^{\sharp}(\mathcal{B}; n, P, l)\le (l-1)n+O(1).
\]
\end{mainthm2}

\begin{proof}
Let $Q\subseteq B_n$ be strongly $P$-free with $w(Q)<l$. If $l=1$, then $w(Q)<1$ forces $Q=\emptyset$, and hence $|\mathcal{C}_t(Q)|=0$. This proves the first case.

Assume now that $l\ge 2$. By Lemma~\ref{lem:universal-decomposition}, there exist pairwise disjoint chains $D_1,D_2,\dots,D_{l-1}$, some of which may be empty, such that $Q=D_1\cup D_2\cup\cdots\cup D_{l-1}$ and $|D_j|\le n+1$ for every $j$.

For each $t$-chain $T\in \mathcal{C}_t(Q)$, let
\[
s(T)=\bigl|\{\,j\in [l-1]: T\cap D_j\neq\emptyset\,\}\bigr|
\]
be the number of chains among $D_1,\dots,D_{l-1}$ that meet $T$. Since $T$ has exactly $t$ elements, we have $1\le s(T)\le \min(t,l-1)$.

For each integer $i$ with $1\le i\le \min(t,l-1)$, define
\[
\mathcal{C}_t^{(i)}(Q)=\{\,T\in \mathcal{C}_t(Q): s(T)=i\,\}.
\]
Then the families $\mathcal{C}_t^{(i)}(Q)$ are pairwise disjoint and their union is $\mathcal{C}_t(Q)$.

Fix such an $i$. To form a $t$-chain $T\in \mathcal{C}_t^{(i)}(Q)$, one must first choose the $i$ chains among $D_1,\dots,D_{l-1}$ that intersect $T$, and there are $\binom{l-1}{i}$ ways to do this. Once these $i$ chains are chosen, the chain $T$ is a $t$-element subset of their union. Since each selected chain has size at most $n+1$, that union has size at most $i(n+1)$. Hence the number of possibilities for $T$ is at most $\binom{i(n+1)}{t}$. Therefore
\[
|\mathcal{C}_t^{(i)}(Q)|\le \binom{l-1}{i}\binom{i(n+1)}{t}.
\]

Summing over all admissible values of $i$, we obtain
\[
|\mathcal{C}_t(Q)|
=\sum_{i=1}^{\min(t,l-1)} |\mathcal{C}_t^{(i)}(Q)|
\le
\sum_{i=1}^{\min(t,l-1)} \binom{l-1}{i}\binom{i(n+1)}{t}.
\]
Since $Q$ was an arbitrary admissible poset in the definition of $\operatorname{RT}^{\sharp}(\mathcal{B}; n, P, l, t)$, this proves the claimed upper bound for the strong number. The corresponding upper bound for the weak number follows from Theorem~\ref{th-relation}.

For the asymptotic estimate, note that for each fixed $i$ and fixed $t$,
\[
\binom{i(n+1)}{t}=\frac{i^t}{t!}n^t+O(n^{t-1})
\qquad (n\to\infty).
\]
Substituting this into the previous bound gives
\[
|\mathcal{C}_t(Q)|
\le
\sum_{i=1}^{\min(t,l-1)} \binom{l-1}{i}
\left(\frac{i^t}{t!}n^t+O(n^{t-1})\right)
=
\left(\frac{1}{t!}\sum_{i=1}^{\min(t,l-1)}\binom{l-1}{i}i^t\right)n^t+O(n^{t-1}),
\]
which yields the asymptotic upper bound for $\operatorname{RT}^{\sharp}(\mathcal{B}; n, P, l, t)$.

Finally, if $t=1$, then only the term $i=1$ appears, and so
\[
\operatorname{RT}^{\sharp}(\mathcal{B}; n, P, l)
\le
\binom{l-1}{1}\binom{n+1}{1}
=
(l-1)(n+1)
=
(l-1)n+O(1).
\]
This completes the proof.
\end{proof}

We next characterize strong embeddings for antichains.

\begin{lemma}\label{lem:antichain_embedding}
Let $A_k$ be the $k$-element antichain, and let $Q\subseteq B_n$ be a poset. Then $Q$ is strongly $A_k$-free if and only if $w(Q)\le k-1$.
\end{lemma}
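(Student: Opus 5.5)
The plan is to prove the two directions separately, using only the definitions of strong embedding and width.

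For the direction ``if $w(Q)\le k-1$ then $Q$ is strongly $A_k$-free'', I will argue by contraposition. Suppose $f:A_k\to Q$ is a strong embedding. By Proposition~\ref{prop:embedding-monotone}, $w(Q)\ge w(A_k)=k$, which contradicts $w(Q)\le k-1$. One can also see this directly. Distinct elements $x\ne y$ of $A_k$ are incomparable, and since $f$ is strong, $f(x)\le f(y)$ would force $x\le y$. Hence $f(A_k)$ is a $k$-element antichain in $Q$, where the $k$ elements are distinct because $f$ is injective.

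For the converse, I also argue by contraposition. Assume $w(Q)\ge k$. Then $Q$ contains an antichain $\{b_1,\dots,b_m\}$ with $m\ge k$. Define $f(a_i)=b_i$ for $i=1,\dots,k$; this map is injective. I will check that $f$ is a strong embedding, that is, $f(a_i)\le f(a_j)$ if and only if $a_i\le a_j$. In $A_k$ we have $a_i\le a_j$ exactly when $i=j$. In $Q$ we have $b_i\le b_j$ exactly when $i=j$, because the $b$'s are pairwise incomparable and $\le$ is reflexive. So the two relations coincide, and $f$ is a strong embedding of $A_k$. Hence $Q$ is not strongly $A_k$-free.

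There is no substantive obstacle here. The only care needed is with the edge cases: for $k=1$ the claim reads ``$Q$ is strongly $A_1$-free iff $Q=\emptyset$'', which is consistent with the argument above; and since the width is an integer, $w(Q)<k$ is equivalent to $w(Q)\le k-1$. The ambient $B_n$ plays no role, so the argument works for any finite poset $Q$.
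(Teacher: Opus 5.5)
Your proposal is correct and takes the same approach as the paper: the image of a strong embedding of $A_k$ is a $k$-element antichain, and conversely any $k$-element antichain in $Q$ yields a strong embedding via $a_i\mapsto b_i$, since in both posets the order relation reduces to equality of indices. Your optional appeal to Proposition~\ref{prop:embedding-monotone} just packages the same first-direction argument, and your edge-case remarks about $k=1$ and the integrality of $w(Q)$ are sound.
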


\begin{proof}
Suppose first that $Q$ is not strongly $A_k$-free. Then there exists a strong embedding $f:A_k\to Q$. Since any two distinct elements of $A_k$ are incomparable, the definition of a strong embedding implies that any two distinct elements of $f(A_k)$ are also incomparable in $Q$. Hence $f(A_k)$ is an antichain of size $k$ in $Q$, and therefore $w(Q)\ge k$.

Conversely, suppose that $w(Q)\ge k$. Then $Q$ contains an antichain $\{s_1,s_2,\dots,s_k\}$ of size $k$. Let $A_k=\{a_1,a_2,\dots,a_k\}$, and define $f:A_k\to Q$ by $f(a_i)=s_i$ for each $i\in [k]$. The map $f$ is injective, and for any $i,j\in [k]$, we have $a_i\le a_j$ if and only if $i=j$, while $f(a_i)\subseteq f(a_j)$ if and only if $i=j$, because $\{s_1,s_2,\dots,s_k\}$ is an antichain. Thus $f$ is a strong embedding of $A_k$ into $Q$, so $Q$ is not strongly $A_k$-free.

Therefore, $Q$ is strongly $A_k$-free if and only if $w(Q)\le k-1$.
\end{proof}

Combining Theorem~\ref{thm:main-universal} with Lemma~\ref{lem:antichain_embedding}, we obtain matching asymptotic bounds for antichains.

\begin{corollary}\label{cor:antichain-complete}
Let $P=A_k$ be the $k$-element antichain, and let $L=\min\{l-1,k-1\}$. Then
\[
\operatorname{RT}^{\sharp}(\mathcal{B}; n, A_k, l, t)\le
\begin{cases}
0, & \text{if } L=0,\\[1ex]
\displaystyle \sum_{i=1}^{\min(t,L)} \binom{L}{i}\binom{i(n+1)}{t}, & \text{if } L\ge 1,
\end{cases}
\]
and, whenever $n\ge L$,
\[
\operatorname{RT}^{\sharp}(\mathcal{B}; n, A_k, l, t)\ge
\begin{cases}
0, & \text{if } L=0 \text{ or } t>n-L+1,\\[1ex]
\displaystyle L\binom{n-L+1}{t}, & \text{if } L\ge 1 \text{ and } t\le n-L+1.
\end{cases}
\]
In particular, for fixed $k,l,t$ and $n\to\infty$, if $L\ge 1$, then $\operatorname{RT}^{\sharp}(\mathcal{B}; n, A_k, l, t)=\Theta(n^t)$, and when $t=1$ we have $\operatorname{RT}^{\sharp}(\mathcal{B}; n, A_k, l)=Ln+O(1)$.
\end{corollary}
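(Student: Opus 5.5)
The upper bound follows from Lemma~\ref{lem:antichain_embedding} and Theorem~\ref{thm:main-universal}. By Lemma~\ref{lem:antichain_embedding}, a poset $Q\subseteq B_n$ is strongly $A_k$-free if and only if $w(Q)\le k-1$. Together with $w(Q)<l$, the admissible family is exactly $\{Q\subseteq B_n: w(Q)\le L\}$, i.e. $\{Q\subseteq B_n : w(Q)<L+1\}$. Running the proof of Theorem~\ref{thm:main-universal} with $l$ replaced by $L+1$ gives the claim. If $L=0$ then $Q=\emptyset$ and the bound is $0$. Otherwise Lemma~\ref{lem:universal-decomposition} splits $Q$ into $L$ disjoint chains of size at most $n+1$. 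Counting $t$-chains by the number $i\le\min(t,L)$ of decomposition chains they meet then yields $\sum_{i}\binom{L}{i}\binom{i(n+1)}{t}$.

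For the lower bound with $L\ge1$ and $t\le n-L+1$, I would build $L$ pairwise incomparable chains, each of length $n-L+2$. Split $[n]$ into $[L-1]$ and $R=\{L,\dots,n\}$, so $|R|=n-L+1$. For $i=1,\dots,L$ let $A_i=[L-1]\setminus\{i\}$ for $i\le L-1$, and $A_L=[L-1]$. This fails, since $A_L\supseteq A_i$ makes the family comparable. So instead use $L$ distinct elements of a middle layer of $B_{L-1}$ or similar, though that needs more room. The cleaner choice is to take $A_i=\{i\}$ inside $[L]$, which gives an antichain of $L$ singletons, and the set $R'=\{L+1,\dots,n\}$ of size $n-L$. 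Set $C_i=\{\{i\}\cup S_j: j=0,\dots,n-L\}$, where $S_0\subset S_1\subset\cdots\subset S_{n-L}=R'$ is a maximal chain in $2^{R'}$. Each $C_i$ is a chain of $n-L+1$ elements. Elements of different $C_i$ are incomparable, because an inclusion would force $\{i\}\subseteq\{j\}\cup S$ with $i\notin S$, which is impossible.

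With this construction $Q=\bigcup C_i$ has width exactly $L<l$ and $L\le k-1$, so $Q$ is strongly $A_k$-free by Lemma~\ref{lem:antichain_embedding}. Its $t$-chains are exactly the $t$-subsets of a single $C_i$, so their number is $L\binom{n-L+1}{t}$. The cases $L=0$ or $t>n-L+1$ are trivial, since the bound is $0$. The requirement $n\ge L$ ensures $\{1,\dots,L\}\subseteq[n]$.

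For the asymptotics, both bounds are $\Theta(n^t)$ for fixed $k,l,t$ when $L\ge1$. When $t=1$, the upper bound is $L(n+1)$ and the lower bound is $L(n-L+1)$. Both equal $Ln+O(1)$.

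The main obstacle is getting the chain length in the construction to match the stated $n-L+1$ while keeping the chains mutually incomparable. The singleton-plus-common-chain construction above achieves this, so I expect no real difficulty beyond checking the incomparability claim carefully.
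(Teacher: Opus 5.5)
Your proposal is correct and follows the paper's proof essentially verbatim. For the upper bound you rerun the Dilworth-based counting of Theorem~\ref{thm:main-universal} with $L$ in place of $l-1$, justified by Lemma~\ref{lem:antichain_embedding}, and your singletons-plus-maximal-chain construction is exactly the paper's $C_i=\{\{i\}\cup\{L+1,\dots,L+s\}: 0\le s\le n-L\}$. In a final write-up, simply drop the abandoned first attempt (chains of length $n-L+2$ built from $[L-1]\setminus\{i\}$), since only the second construction is used.
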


\begin{proof}
By Lemma~\ref{lem:antichain_embedding}, a poset $Q\subseteq B_n$ is strongly $A_k$-free if and only if $w(Q)\le k-1$. Together with the condition $w(Q)<l$, this shows that every admissible $Q$ satisfies $w(Q)\le L$.

Applying the same counting argument as in Theorem~\ref{thm:main-universal}, but with $L$ in place of $l-1$, we obtain
\[
\operatorname{RT}^{\sharp}(\mathcal{B}; n, A_k, l, t)\le
\begin{cases}
0, & \text{if } L=0,\\[1ex]
\displaystyle \sum_{i=1}^{\min(t,L)} \binom{L}{i}\binom{i(n+1)}{t}, & \text{if } L\ge 1.
\end{cases}
\]

For the lower bound, assume that $L\ge 1$ and $n\ge L$. For each $i\in [L]$, define
\[
C_i=\bigl\{\{i\}\cup\{L+1,L+2,\dots,L+s\}: s=0,1,\dots,n-L\bigr\}\subseteq B_n,
\]
and let $Q=\bigcup_{i=1}^L C_i$. Each $C_i$ is a chain of size $n-L+1$, and if $X\in C_i$ and $Y\in C_j$ with $i\ne j$, then $i\in X\setminus Y$ and $j\in Y\setminus X$, so $X$ and $Y$ are incomparable. Hence elements from different chains are incomparable, and thus every antichain in $Q$ contains at most one element from each $C_i$. Conversely, choosing one element from each chain gives an antichain of size $L$, so $w(Q)=L\le k-1$. By Lemma~\ref{lem:antichain_embedding}, $Q$ is strongly $A_k$-free, and since also $L\le l-1$, we have $w(Q)<l$. Therefore $Q$ is admissible in the sense of Definition~\ref{defi-BRTn_combined}.

Moreover, every $t$-chain in $Q$ lies entirely inside one of the chains $C_i$, while each $C_i$ contains exactly $\binom{n-L+1}{t}$ such $t$-chains. It follows that $|\mathcal{C}_t(Q)|=L\binom{n-L+1}{t}$, which proves the lower bound.

For fixed $k,l,t$ and $L\ge 1$, the lower bound satisfies $L\binom{n-L+1}{t}=\frac{L}{t!}n^t+O(n^{t-1})=\Omega(n^t)$, whereas the upper bound is clearly $O(n^t)$. Hence $\operatorname{RT}^{\sharp}(\mathcal{B}; n, A_k, l, t)=\Theta(n^t)$. When $t=1$, the upper bound is at most $L(n+1)$ and the lower bound is at least $L(n-L+1)$; their difference is the constant $L^2$, so $\operatorname{RT}^{\sharp}(\mathcal{B}; n, A_k, l)=Ln+O(1)$.
\end{proof}

\begin{remark}
The disjoint-chain construction used in Corollary~\ref{cor:antichain-complete} is asymptotically optimal in order for every fixed $t$, and it is tight up to an additive constant when $t=1$. For $t\ge 2$, the gap between the upper and lower bounds is caused by the universal upper bound allowing potential cross-chain contributions, whereas in our construction every $t$-chain lies inside a single chain.
\end{remark}

\begin{remark}\label{rem:weak-antichain}
The weak antichain case is quite different from the strong one. Since $A_k$ has no nontrivial order relations, every $k$ distinct elements of an arbitrary poset form a weak copy of $A_k$. Hence a poset $Q$ is weakly $A_k$-free if and only if $|Q|\le k-1$. Consequently, if $l\ge2$ and $n\ge k-2$, then
\[
\operatorname{RT}(\mathcal{B};n,A_k,l,t)=\binom{k-1}{t},
\]
with the usual convention that $\binom{k-1}{t}=0$ when $t>k-1$. For $l=1$, the value is $0$.
\end{remark}

\section{Structured lower bounds for Boolean Ramsey-Tur\'an numbers}\label{sec_4}

We now prove the lower-bound results stated in the introduction. The first one is a direct disjoint-chain construction for all non-chain posets.

\newtheorem*{mainthm3}{\rm\bf Theorem~\ref{thm:main-lower}}
\begin{mainthm3}[Restated]
Let $P$ be a finite poset with $w(P)=s\ge 2$. Let $l\ge2$ and put
$L=\min\{l-1,s-1\}$.
If $n\ge L$ and $1\le t\le n-L+1$, then
\[
\operatorname{RT}^{\sharp}(\mathcal{B};n,P,l,t)\ge
L\binom{n-L+1}{t}.
\]
Consequently, for every fixed non-chain poset $P$ and fixed integers $l,t$ with $l\ge2$, we have
\[
\operatorname{RT}^{\sharp}(\mathcal{B};n,P,l,t)=\Theta(n^t)
\]
whenever $\min\{l-1,w(P)-1\}\ge1$.
\end{mainthm3}

\begin{proof}
For each $i\in [L]$, define
\[
C_i=\bigl\{\{i\}\cup\{L+1,L+2,\dots,L+j\}:j=0,1,\dots,n-L\bigr\}\subseteq B_n,
\]
and set $Q=\bigcup_{i=1}^{L}C_i$.

Each $C_i$ is a chain of size $n-L+1$. If $X\in C_i$ and $Y\in C_j$ with $i\ne j$, then $i\in X\setminus Y$ and $j\in Y\setminus X$, so $X$ and $Y$ are incomparable. Hence $Q$ is the disjoint union of $L$ pairwise incomparable chains. It follows that $w(Q)=L<l$.

Since $L\le s-1<w(P)$, Proposition~\ref{prop:embedding-monotone} implies that $Q$ contains no strong copy of $P$; otherwise such a copy would have width at least $w(P)=s$, whereas $Q$ has width $L\le s-1$. Thus $Q$ is strongly $P$-free and is admissible in the definition of $\operatorname{RT}^{\sharp}(\mathcal{B};n,P,l,t)$.

Every $t$-chain of $Q$ lies entirely inside one of the chains $C_i$, because elements from different chains are incomparable. Each chain $C_i$ contains exactly $\binom{n-L+1}{t}$ $t$-chains. Therefore
\[
|\mathcal{C}_t(Q)|=L\binom{n-L+1}{t},
\]
which proves the claimed lower bound.

For fixed $P,l,t$, the quantity $L$ is a positive constant, and hence
\[
L\binom{n-L+1}{t}=\frac{L}{t!}n^t+O(n^{t-1})=\Omega(n^t).
\]
The universal upper bound in Theorem~\ref{thm:main-universal} gives $O(n^t)$. Therefore $\operatorname{RT}^{\sharp}(\mathcal{B};n,P,l,t)=\Theta(n^t)$. This completes the proof.
\end{proof}

For bounding the probability that the number of selected parent chains deviates from its expectation, we use the following Chernoff bound.

\begin{theorem}[Chernoff Bound, \cite{MR95}]\label{thm:chernoff}
If $X$ is a binomial random variable with expectation $\mu$, and $\epsilon>0$, then $$\Pr[X \le (1-\epsilon)\mu]\le \exp\left(-\frac{\epsilon^2\mu}{2}\right),$$ and $$\Pr[X \ge (1+\epsilon)\mu]\le \left(\frac{e^\epsilon}{(1+\epsilon)^{1+\epsilon}}\right)^{\mu}.$$
\end{theorem}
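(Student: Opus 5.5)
The plan is the standard exponential-moment (Bernstein--Chernoff) method. Write $X=\sum_{j=1}^{N}X_j$ with the $X_j$ independent Bernoulli$(p)$, so that $\mu=Np$. We may assume $0<\mu$, since otherwise $X\equiv 0$ and both tail events are trivial or vacuous. The single tool is Markov's inequality applied to $e^{\lambda X}$ or $e^{-\lambda X}$. It is combined with independence and the estimate $1+x\le e^x$, which together give the moment generating function bound
\[
\mathbb{E}\bigl[e^{\lambda X}\bigr]=\prod_{j=1}^{N}\bigl(1+p(e^{\lambda}-1)\bigr)\le \exp\bigl(\mu(e^{\lambda}-1)\bigr)\qquad(\lambda\in\mathbb{R}).
\]

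\emph{Upper tail.} For $\lambda>0$, Markov's inequality gives
\[
\Pr[X\ge(1+\epsilon)\mu]\le \exp\bigl(\mu(e^\lambda-1)-\lambda(1+\epsilon)\mu\bigr).
\]
Optimizing at $\lambda=\ln(1+\epsilon)$ turns the exponent into $\mu\bigl(\epsilon-(1+\epsilon)\ln(1+\epsilon)\bigr)$. This is exactly $\bigl(e^{\epsilon}/(1+\epsilon)^{1+\epsilon}\bigr)^{\mu}$, with no further work needed.

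\emph{Lower tail.} First dispose of large $\epsilon$. If $\epsilon>1$, the event $X\le(1-\epsilon)\mu<0$ is empty. If $\epsilon=1$, we have $\Pr[X\le 0]=(1-p)^N\le e^{-\mu}\le e^{-\mu/2}$. So assume $0<\epsilon<1$. For $\lambda>0$, Markov applied to $e^{-\lambda X}$ together with the moment generating function bound (with $-\lambda$ in place of $\lambda$) gives
\[
\Pr[X\le(1-\epsilon)\mu]\le\exp\bigl(\mu(e^{-\lambda}-1)+\lambda(1-\epsilon)\mu\bigr).
\]
Choosing $\lambda=-\ln(1-\epsilon)$ yields the bound $\bigl(e^{-\epsilon}/(1-\epsilon)^{1-\epsilon}\bigr)^{\mu}$.

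It then remains to verify the elementary inequality
\[
-\epsilon-(1-\epsilon)\ln(1-\epsilon)\le-\frac{\epsilon^2}{2}\qquad(0<\epsilon<1).
\]
This is the only step needing care, and I expect it to be the main (though mild) obstacle. I would prove it by expanding $(1-\epsilon)\ln(1-\epsilon)=-\epsilon+\sum_{k\ge2}\frac{\epsilon^k}{k(k-1)}$, obtained by multiplying the series for $\ln(1-\epsilon)$ by $1-\epsilon$ and collecting terms. All terms of the sum are nonnegative, so $(1-\epsilon)\ln(1-\epsilon)\ge-\epsilon+\epsilon^2/2$, which is the required inequality.

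As an alternative, set $g(\epsilon)=(1-\epsilon)\ln(1-\epsilon)+\epsilon-\epsilon^2/2$. Then $g(0)=0$ and $g'(\epsilon)=-\ln(1-\epsilon)-\epsilon\ge0$, so $g\ge 0$ on $[0,1)$. Substituting this inequality into the optimized bound gives $\Pr[X\le(1-\epsilon)\mu]\le\exp(-\epsilon^2\mu/2)$, completing the proof.
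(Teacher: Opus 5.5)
Your proof is correct. The moment-generating-function bound, the choices $\lambda=\ln(1+\epsilon)$ and $\lambda=-\ln(1-\epsilon)$, and the estimate $(1-\epsilon)\ln(1-\epsilon)\ge-\epsilon+\epsilon^2/2$ are all right. Your separate handling of $\mu=0$ and of $\epsilon\ge 1$ is what the statement as written needs, since it covers every $\epsilon>0$ with non-strict inequalities.

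The paper gives no proof of its own and only cites Motwani and Raghavan. There, this same exponential-moment argument, stated for sums of independent Bernoulli trials, is the standard proof.
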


Building on the structural construction of Boolean lattices in Assumption \ref{ass:boolean}, we restate our core result on the asymptotic lower bound for the Boolean Ramsey-Tur\'an number $\mathrm{RT}(\mathcal{B}; n, P, l, t)$ as follows.

\newtheorem*{mainthm4}{\rm\bf Theorem~\ref{thm:main}}
\begin{mainthm4}[Restated]
Under Assumption \ref{ass:boolean}, for any integer-valued function $l(n)$ satisfying $2 \leq l(n) \leq \lfloor M_n^\alpha \rfloor$:
\begin{enumerate}
    \item[(i)] For fixed $l\geq 2$ and all sufficiently large $n$, 
    \begin{equation}\label{eq:rt_fixed_l}
    \operatorname{RT}(\mathcal{B}; n, P, l, t) \geq \binom{r-1}{t}(l-1).
    \end{equation}
    \item[(ii)] For $l(n)\to\infty$ as $n\to\infty$, 
    \begin{equation}\label{eq:rt_large_l}
    \operatorname{RT}(\mathcal{B}; n, P, l(n), t) \geq (1-o(1))\binom{r-1}{t}\,l(n).
    \end{equation}
\end{enumerate}
In particular:
\begin{itemize}
    \item For $l(n)$ in the above range with $l(n)=\omega(n^c)$ for any fixed $c>0$, $\operatorname{RT}(\mathcal{B}; n, P, l(n), t)=\omega(n^c)$.
    \item For $l(n)=\lfloor M_n^\beta\rfloor$ with $0<\beta\leq\alpha$, 
    \begin{equation}\label{eq:rt_beta}
    \operatorname{RT}(\mathcal{B}; n, P, l(n), t)=\Omega(2^{\beta n}n^{-\beta/2}).
    \end{equation}
\end{itemize}
Moreover, the same lower bounds hold for the strong Boolean Ramsey-Tur\'an number with the corresponding parameter $l$ or $l(n)$.
\end{mainthm4}
\begin{proof}

By Assumption \ref{ass:boolean}(A4), every subset of $\mathcal{L}$ is weakly $P$-free. Since $\mathcal{L}\subseteq B_n$, any subset $Q\subseteq \mathcal{L}$ with $w(Q)<l(n)$ is admissible in the definition of $\operatorname{RT}(\mathcal{B}; n, P, l(n), t)$. Thus it suffices to construct suitable subsets of $\mathcal{L}$ and count their $t$-chains.

Write 
\begin{equation}\label{eq:def_M_ct}
M=\lfloor M_n^\alpha\rfloor \quad \text{and} \quad c_t=\binom{r-1}{t}.
\end{equation}
By Assumption \ref{ass:boolean}(A2), the family $\mathcal{L}$ is the disjoint union of $M$ chains, namely $\mathcal{L}=\bigcup_{i=1}^{M} C_i$, where each $C_i$ has exactly $r-1$ elements, the chains are pairwise incomparable, and each $C_i$ contains exactly $c_t=\binom{r-1}{t}$ distinct $t$-chains (consistent with \eqref{eq:def_M_ct}).

We consider two cases.

\setcounter{case}{0}

\begin{case}
Fixed $l\geq 2$.
\end{case}

For all sufficiently large $n$, we have $M\ge l-1$. Choose any $l-1$ distinct chains $C_{i_1},\dots,C_{i_{l-1}}$ from $\mathcal{L}$, and set $Q=\bigcup_{j=1}^{l-1} C_{i_j}$. Since different parent chains are pairwise incomparable by Assumption \ref{ass:boolean}(A2), any antichain in $Q$ contains at most one element from each chosen chain. On the other hand, choosing one element from each of the $l-1$ chains gives an antichain of size $l-1$. Hence $w(Q)=l-1<l$. By Assumption \ref{ass:boolean}(A4), the poset $Q$ is weakly $P$-free.

Moreover, every $t$-chain of $Q$ lies entirely inside one of the chosen chains, and each chosen chain contributes exactly $c_t=\binom{r-1}{t}$ $t$-chains. Therefore $|\mathcal{C}_t(Q)|=(l-1)\binom{r-1}{t}$. Since $Q$ is admissible, it follows that \eqref{eq:rt_fixed_l} holds.

\begin{case}
$l(n)\to\infty$ as $n\to\infty$.
\end{case}

We now use a random selection of whole parent chains. Let 
$\delta_n=l(n)^{-1/3}$.
Then $\delta_n\to 0$ and $\delta_n^2 l(n)=l(n)^{1/3}\to\infty$ as $n\to\infty$. Since $l(n)\le M$ (from \eqref{eq:def_M_ct}), for all sufficiently large $n$ we may define 
\begin{equation*}\label{eq:def_pn}
p_n=\frac{(1-\delta_n)l(n)}{M}\in (0,1].
\end{equation*}

Construct a random subset $\widetilde{Q}\subseteq \mathcal{L}$ by independently including each whole chain $C_i$ with probability $p_n$, and excluding it otherwise. Let $N$ denote the number of selected parent chains. Then $N\sim \operatorname{Bin}(M,p_n)$ and 
$\mathbb{E}[N]=Mp_n=(1-\delta_n)l(n)$.

Because distinct parent chains are pairwise incomparable, every $t$-chain of $\widetilde{Q}$ lies entirely inside a single selected parent chain. Since each selected parent chain contributes exactly $c_t=\binom{r-1}{t}$ $t$-chains, we have $|\mathcal{C}_t(\widetilde{Q})|=c_t\,N$. Also, any antichain in $\widetilde{Q}$ contains at most one element from each selected parent chain, while choosing one element from each selected chain gives an antichain of size $N$. Hence $w(\widetilde{Q})=N$.

We claim that with positive probability, $(1-2\delta_n)l(n)\le N<l(n)$. Indeed, since $\mathbb{E}[N]=(1-\delta_n)l(n)$, the upper-tail event $N\ge l(n)$ can be written as $$N\ge \left(1+\frac{\delta_n}{1-\delta_n}\right)\mathbb{E}[N].$$ Applying the Chernoff bound (Theorem \ref{thm:chernoff}) with $\epsilon_n=\frac{\delta_n}{1-\delta_n}>0$, we obtain
\begin{equation}\label{eq:chernoff_upper}
\mathbb{P}(N\ge l(n))\le \left(\frac{e^{\epsilon_n}}{(1+\epsilon_n)^{1+\epsilon_n}}\right)^{\mathbb{E}[N]}.
\end{equation}
Since $\epsilon_n\to 0$, there exists an absolute constant $c>0$ such that for all sufficiently large $n$, 
$$
\left(\frac{e^{\epsilon_n}}{(1+\epsilon_n)^{1+\epsilon_n}}\right)\le \exp(-c\epsilon_n^2),
$$ 
and therefore \eqref{eq:chernoff_upper} implies $$\mathbb{P}(N\ge l(n))\le \exp\!\big(-c\epsilon_n^2\mathbb{E}[N]\big).$$ Because $\epsilon_n=\Theta(\delta_n)$ and $\mathbb{E}[N]=\Theta(l(n))$, it follows that $$\epsilon_n^2\mathbb{E}[N]=\Theta(\delta_n^2 l(n))=l(n)^{1/3}\to\infty,$$ and hence $\mathbb{P}(N\ge l(n))=o(1)$.

Similarly, the lower-tail event $N<(1-2\delta_n)l(n)$ can be rewritten as $$N\le \left(1-\frac{\delta_n}{1-\delta_n}\right)\mathbb{E}[N].$$ Applying the lower-tail form of the Chernoff bound with the same parameter $\epsilon_n=\delta_n/(1-\delta_n)$, we get
\begin{equation}\label{eq:chernoff_lower}
\mathbb{P}\bigl(N<(1-2\delta_n)l(n)\bigr)\le \exp\!\left(-\frac{\epsilon_n^2\mathbb{E}[N]}{2}\right).
\end{equation}
By the same reasoning as above, the right-hand side of \eqref{eq:chernoff_lower} is $o(1)$.

Therefore, we have 
$$\mathbb{P}\Big((1-2\delta_n)l(n)\le N<l(n)\Big)\ge 1-\mathbb{P}\bigl(N<(1-2\delta_n)l(n)\bigr)-\mathbb{P}(N\ge l(n))=1-o(1)>0$$ for all sufficiently large $n$. Hence there exists a deterministic subset $Q^*\subseteq \mathcal{L}$ such that $(1-2\delta_n)l(n)\le w(Q^*)<l(n)$ 
and 
$$|\mathcal{C}_t(Q^*)|=c_t\,N\ge (1-2\delta_n)c_t\,l(n)=(1-2\delta_n)\binom{r-1}{t}l(n).$$

By Assumption \ref{ass:boolean}(A4), the subset $Q^*$ is weakly $P$-free. Therefore $Q^*$ is admissible in the definition of $\operatorname{RT}(\mathcal{B}; n, P, l(n), t)$, and so $\operatorname{RT}(\mathcal{B}; n, P, l(n), t)\ge (1-2\delta_n)\binom{r-1}{t}l(n)$. Since $\delta_n=l(n)^{-1/3}\to 0$, we conclude that \eqref{eq:rt_large_l} holds.
This proves parts (i) and (ii).

For the first consequence, if $l(n)$ is in the above range and $l(n)=\omega(n^c)$ for any fixed $c>0$, then by \eqref{eq:rt_large_l}, $(1-o(1))\binom{r-1}{t}l(n)=\omega(n^c)$, and hence $\operatorname{RT}(\mathcal{B}; n, P, l(n), t)=\omega(n^c)$.

For the second consequence, if $l(n)=\lfloor M_n^\beta\rfloor$ with $0<\beta\le \alpha$, then by Stirling's approximation,
\begin{equation*}\label{eq:stirling_Mn}
M_n=\binom{n}{\lfloor n/2\rfloor}=\Theta\!\left(\frac{2^n}{\sqrt{n}}\right).
\end{equation*}
Therefore, we have $$l(n)=\lfloor M_n^\beta\rfloor=\Theta\!\left(\left(\frac{2^n}{\sqrt{n}}\right)^\beta\right)=\Theta\!\left(2^{\beta n}n^{-\beta/2}\right),$$ and combining with \eqref{eq:rt_large_l} we conclude that \eqref{eq:rt_beta} holds.

Finally, the same lower bounds hold for the corresponding strong Boolean Ramsey-Turán number. Indeed, every subset of $\mathcal{L}$ has height at most $r-1<h(P)$, so it contains neither a weak copy nor a strong copy of $P$. Hence all subsets constructed above are also admissible for the strong Boolean Ramsey-Tur\'an number. This completes the proof.
\end{proof}

Having established the asymptotic lower bound above, we next show that this construction is asymptotically tight up to a constant factor inside the structured family $\mathcal{L}$.

\begin{corollary}\label{cor:tightness}
Under Assumption \ref{ass:boolean}, for any $Q\subseteq\mathcal{L}$ with $w(Q)<l(n)$, we have
\[
|\mathcal{C}_t(Q)| \leq \binom{r-1}{t}(l(n)-1).
\]
\end{corollary}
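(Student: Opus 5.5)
The plan is to exploit the rigid structure of $\mathcal{L}$ from Assumption~\ref{ass:boolean}(A2): it is a disjoint union of the pairwise incomparable chains $C_1,\dots,C_M$, each of size $r-1$. Fix $Q\subseteq\mathcal{L}$ with $w(Q)<l(n)$, and for each $i$ set $Q_i=Q\cap C_i$. Let $I=\{i: Q_i\neq\emptyset\}$ and $N=|I|$.

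First I will show that $w(Q)=N$. Elements of distinct $C_i$ are incomparable, so choosing one element from each nonempty $Q_i$ gives an antichain of size $N$. Conversely, each $Q_i$ is a chain, so any antichain meets each $Q_i$ in at most one element. Hence $N=w(Q)\le l(n)-1$.

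Next I count $t$-chains. Any $t$-chain $T$ in $Q$ consists of pairwise comparable elements. Since elements from different parent chains are incomparable, $T$ lies entirely inside a single $Q_i$. Thus $\mathcal{C}_t(Q)$ is the disjoint union of the families $\mathcal{C}_t(Q_i)$ over $i\in I$. For each $i$, every $t$-subset of the chain $Q_i$ is a $t$-chain, so
\[
|\mathcal{C}_t(Q_i)|=\binom{|Q_i|}{t}\le\binom{r-1}{t},
\]
because $|Q_i|\le |C_i|=r-1$ and $\binom{m}{t}$ is monotone in $m$. Summing over $i\in I$ gives
\[
|\mathcal{C}_t(Q)|\le N\binom{r-1}{t}\le (l(n)-1)\binom{r-1}{t}.
\]
This is the claimed bound, and it matches part (i) of Theorem~\ref{thm:main}.

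No step is a real obstacle; the argument is a direct count. The only points needing care are the two structural facts: that cross-chain incomparability forces every $t$-chain into a single parent chain, and that the width equals the number of occupied parent chains.
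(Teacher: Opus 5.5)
Your proposal is correct and follows essentially the same route as the paper. Both arguments identify $w(Q)$ with the number of parent chains meeting $Q$ (so this number is at most $l(n)-1$), then use cross-chain incomparability to place every $t$-chain inside a single $Q\cap C_i$, and finally bound each contribution by $\binom{r-1}{t}$.
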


\begin{proof}
Let $S=\{\,i : C_i\cap Q\neq\emptyset\,\}$ be the set of parent chains with non-empty intersection with $Q$.

By Assumption \ref{ass:boolean}(A2), elements from different parent chains are pairwise incomparable. Hence any antichain in $Q$ contains at most one element from each parent chain in $S$, so $w(Q)\le |S|$. Conversely, for each $i\in S$, choose one element of $C_i\cap Q$. Since these chosen elements lie in distinct parent chains, they are pairwise incomparable, and therefore form an antichain of size $|S|$. Thus $w(Q)\ge |S|$. Therefore $w(Q)=|S|$.
Since $w(Q)<l(n)$, it follows that $|S|=w(Q)\le l(n)-1$.

Now every $t$-chain of $Q$ lies inside a single parent chain, because different parent chains are pairwise incomparable. Also, each parent chain $C_i$ has exactly $r-1$ elements, so any subset of $C_i$ contains at most $\binom{r-1}{t}$ $t$-chains. Hence
\[
|\mathcal{C}_t(Q)|=\sum_{i\in S} |\mathcal{C}_t(Q\cap C_i)|\le \sum_{i\in S}\binom{r-1}{t}=\binom{r-1}{t}|S|\le \binom{r-1}{t}(l(n)-1).
\]
This completes the proof.
\end{proof}

\end{document}